\documentclass[adraft,copyright,creativecommons, noderivs, noncommercial]{eptcs}

\usepackage{iftex}
\ifpdf
  \usepackage{underscore}
  \usepackage[T1]{fontenc}
\fi

\usepackage{amsmath,amsfonts}
\DeclareSymbolFont{AMSa}{U}{msa}{m}{n}
\DeclareMathSymbol{\twoheadrightarrow}{\mathrel}{AMSa}{"10}
\let\coprod\relax
\DeclareMathSymbol{\coprod}{\mathop}{AMSa}{"60}
\DeclareSymbolFont{AMSb}{U}{msb}{m}{n}
\DeclareMathSymbol{\varnothing}{\mathord}{AMSb}{"3F}
\usepackage{amsthm}
\usepackage{tikz-cd}
\tikzcdset{row sep/normal=0.7cm, column sep/normal=0.7cm}
\usepackage{mathtools}

\newtheorem{theo}{Theorem}[section]
\newtheorem{theorem}[theo]{Theorem}
\newtheorem{prop}[theo]{Proposition}

\newtheorem{coro}[theo]{Corollary}

\theoremstyle{definition}
\newtheorem{defi}[theo]{Definition}

\newtheorem{ex}[theo]{Example}

\newtheorem{contre-ex}[theo]{Counterexample}

\newtheorem{constr}[theo]{Construction}

\newtheorem{conv}[theo]{Convention}

\theoremstyle{remark}
\newtheorem{rem}[theo]{Remark}

\newcommand{\CC}{\mathcal{C}}
\newcommand{\DD}{\mathcal{D}}
\newcommand{\FF}{\mathcal{F}}

\newcommand{\cS}{\mathcal{S}}

\newcommand{\M}{\mathrm{M}}

\DeclareMathOperator{\Rr}{\mathbb{R}}

\renewcommand{\phi}{\varphi}

\DeclareMathOperator{\Coalg}{Coalg}
\newcommand{\Sys}{\mathrm{Sys}}
\DeclareMathOperator{\Beh}{Beh}

\DeclareMathOperator{\colim}{colim}
\newcommand{\id}{\mathrm{id}}
\newcommand{\Set}{\mathbf{Set}}

\title{Separation and Gluing of Explanations\\on Sites of Dynamical Systems}
\author{Paul Wang
\institute{LIP6, Sorbonne Universit\'{e}, Paris, France}
\email{paul.wang@lip6.fr}
}
\def\titlerunning{Separation and Gluing of Explanations on Sites of Dynamical Systems}
\def\authorrunning{P.~Wang}

\begin{document}
\maketitle

\begin{abstract}
We construct a Grothendieck site whose objects are Mealy machines over definable sets in an o-minimal structure and whose coverings are jointly surjective families of definable open immersions.
On this site, we define presheaves of \emph{explanations}---systems equipped with an interpretable interface, parameterised by a ``judge.''
We prove that the behavioral presheaf (quotienting by observable output equivalence) is \emph{separated}: a global explanation is determined by its local restrictions.
We show that gluing fails in general---locally consistent explanations need not assemble globally---and give, for stateless explanatory systems of the restricted-interface presheaf, a necessary and sufficient topological condition for the sheaf property in terms of robust disconnection of fibers of the judge.
\end{abstract}

\section{Introduction}\label{sec:intro}

Interpretability in machine learning seeks to explain \emph{why} a model produces a given output.
A striking feature of current methods is their inherent \textbf{locality}: explanations are given for specific data points, specific circuits, or specific feature directions---never for the model as a whole.
The fundamental question is: \emph{when multiple local explanations are available, do they fit together into a coherent global picture?}

This is precisely the question addressed by sheaf theory.
A sheaf assigns data to each open set and specifies how data on overlaps must agree; the sheaf condition captures the principle that compatible local data glue uniquely to global data.
When local data fail to glue, the obstruction can in principle be measured by sheaf cohomology (\S\ref{sec:discussion}).

Sheaf-theoretic approaches to dynamical systems have been studied before: Schultz, Spivak, and Vasilakopoulou~\cite{SSV2020} model systems as sheaves on time intervals; Goguen~\cite{Goguen1992} uses sheaves on observations; Joyal, Nielsen, and Winskel~\cite{JNW1996} use presheaves on paths.
In each case, a Grothendieck topology is placed on a ``base'' category, and systems are modelled \emph{as}~sheaves.
In a different vein, Abramsky and Brandenburger~\cite{AbramskyBrandenburger2011} use sheaves to characterise non-locality and contextuality in quantum foundations: local measurement data fail to extend to a global section, an obstruction measured by sheaf cohomology.
Our gluing failure (Counterexample~\ref{cex:beh-gluing-fail}) is structurally analogous---compatible local explanations fail to extend globally---but the mathematical content is entirely different: their presheaves live on measurement scenarios, ours on categories of dynamical systems.
On the interpretability side, Geiger et al.~\cite{Geiger_etal2025} develop \emph{causal abstraction} as a framework for mechanistic interpretability; their notion of faithfulness relates a single abstraction to a model, whereas our framework studies when \emph{multiple} local abstractions (sections) compose.
Bassan, Amir, and Katz~\cite{Bassan_etal2024} study local-vs-global interpretability from a computational complexity perspective.

We take a different approach: we define a Grothendieck topology on a category whose \emph{objects are systems} and whose morphisms are open immersions.
The sheaf condition on such a site formalises a local-to-global principle with respect to \emph{system representations}.
To the best of our knowledge, no such site structure has previously been studied.

\paragraph{Contributions.}
Working over definable sets in an o-minimal structure (a setting providing tame geometry and a canonical class of open immersions):
\begin{enumerate}
\item We prove that categories of homogeneous Mealy machines over an adhesive base inherit adhesivity (Theorem~\ref{thm:adhesivity}), and construct a Grothendieck site $(\Sys_{\mathrm{oi}}, \mathrm{Cov})$ of definable open immersions (\S\ref{sec:systems-sites}).
\item We define a hierarchy of presheaves of explanations, parameterised by a ``judge'' translating the system's interface to interpretable concepts (\S\ref{sec:judges-presheaves}).
\item We prove that the behavioral presheaf $\FF_j^{\mathrm{beh}}$ is \emph{separated} (\S\ref{sec:separation}): a global explanation is determined by its local restrictions.
\item We show that gluing \emph{fails} in general (\S\ref{sec:gluing}), and give a necessary and sufficient topological condition for the sheaf property of the presheaf $\FF_{j,1}^{\mathrm{ri}}$ of restriced-interface stateless explanations \ref{subsec:positive}, in terms of robust disconnection of fibers of the judge (Theorem~\ref{thm:singleton-ns}).
\item We introduce $\varepsilon$-approximate sections (\S\ref{sec:approximate}) and show that the dimension of the output space controls the depth of higher-order reconciliation obstructions via Helly's theorem (Theorem~\ref{thm:helly}).
\end{enumerate}

\noindent\textbf{Notation.}
We write $\cS = (S_b, S_a, I, O, \alpha)$ for a heterogeneous Mealy machine with before-states $S_b$, after-states $S_a$, inputs $I$, outputs $O$, and dynamics $\alpha : S_b \times I \to S_a \times O$.
All systems have non-empty $I$ and $O$.

\section{Systems, Sites, and the O-Minimal Setting}\label{sec:systems-sites}

We set up the categorical framework: Mealy machines over definable sets in an o-minimal structure, definable open immersions as morphisms, and the resulting Grothendieck site.

\subsection{Heterogeneous Mealy machines}\label{subsec:mealy}

\begin{defi}\label{def:sys}
Let $\CC$ be a category with finite products.
A \textbf{heterogeneous Mealy machine} is a quintuple $\cS = (S_b, S_a, I, O, \alpha)$ with $\alpha : S_b \times I \to S_a \times O$, where $S_b$ (before-states) and $S_a$ (after-states) may differ; this arises naturally when restricting to a subset of states whose after-states may leave the subset (cf.~\cite{SprungerKatsumata2019} for a categorical treatment of time-varying state spaces).
When $S_b = S_a =: S$, we recover a \emph{homogeneous} Mealy machine $(S, I, O, \alpha)$; in $\Set$, these are coalgebras for the endofunctor $H(X) = (X \times O)^I$.
\end{defi}

A \textbf{morphism} $\cS \to \cS'$ is a 4-tuple $(f_b, f_a, f_I, f_O)$ making the dynamics square commute:
\[
\begin{tikzcd}
S_b \times I \arrow[r, "\alpha"] \arrow[d, "f_b \times f_I"'] & S_a \times O \arrow[d, "f_a \times f_O"] \\
S_b' \times I' \arrow[r, "\alpha'"'] & S_a' \times O'
\end{tikzcd}
\]
We write $\Sys$ for the resulting category, and $\Sys_{\mathrm{ho}}(I,O)$ for the \emph{non full} subcategory of fixed-$(I,O)$ homogeneous systems where morphisms between state objects are the same for before-state and after-state.

\subsection{The o-minimal setting}\label{subsec:o-minimal}

ACT approaches to dynamical systems typically work over $\Set$, topological spaces, or categories of smooth manifolds.
We work instead over $\mathrm{Def}(\M)$, the category of \emph{definable sets} in an o-minimal structure $\M$, or its wide subcategory $\mathrm{Def}_c(\M)$ of continuous definable maps (Remark~\ref{rem:why-not-set}).
The motivation comes from a classical idea of Grothendieck, who proposed in his \emph{Esquisse d'un Programme}~\cite{Grothendieck1997} that the category of topological spaces is too wild for geometry, and called for a theory of \emph{topologie mod\'er\'ee} (tame topology):

\begin{quote}
\emph{%
La notion de ``espace topologique'' [...] est manifestement inad\'equate [...].
La structure topologique [...] laisse coexister des ph\'enom\`enes ``mod\'er\'es'' [...] avec des ph\'enom\`enes ``sauvages'' [...].
C'est la pr\'edominance de ces derniers qui rend inad\'equate la notion actuelle de ``vari\'et\'e topologique'' [...].
On voit mal d'ailleurs quelle id\'ee raisonnable on pourrait se faire d'un ``espace mod\'er\'e'' de dimension quelconque, tant qu'on ne dispose pas d'un ``principe de mod\'eration'' qui permette de se mettre \`a l'abri.%
}
\hfill---Grothendieck, \emph{Esquisse d'un Programme}~\cite{Grothendieck1997}, \S5
\end{quote}

O-minimal structures, introduced by van den Dries~\cite{vandenDries1998} following work of Pillay and Steinhorn, provide precisely such a ``principe de mod\'eration'': a single axiom on one-dimensional definable sets forces tameness in all dimensions.

\begin{defi}[{\cite[Ch.~1]{vandenDries1998}}]\label{def:o-minimal}
An \textbf{o-minimal structure} $\M$ expanding $(\Rr, <, +, \cdot)$ is given by a sequence $(\DD_n)_{n \geq 1}$ of Boolean algebras of subsets of $\Rr^n$, closed under products and projections, such that $\DD_n$ contains all the semialgebraic sets and $\DD_1$ consists exactly of finite unions of intervals and points.
Elements of $\DD_n$ are \textbf{definable sets}; a function between them is \textbf{definable} if its graph is.
\end{defi}

The condition on $\DD_1$ is the ``principle of moderation'': it forbids Cantor sets, space-filling curves, and other pathologies.
The key consequences for our purposes are:

\begin{enumerate}
\item \emph{Cell decomposition.} Every definable set $X \subseteq \Rr^n$ admits a finite partition into \emph{cells} (definable sets of a simple form, that are canonically homeomorphic to open balls), and every definable function on $X$ can be made piecewise continuous by refining the partition~\cite[Ch.~3]{vandenDries1998}.
In particular, definable open subsets are finite unions of connected definable opens.

\item \emph{Adhesivity.} The category $\mathrm{Def}(\M)$ is adhesive~\cite{LackSobocinski2005}: it has pullbacks, pushouts along monomorphisms, and these satisfy the Van Kampen condition.
The proof uses the faithful embedding $\mathrm{Def}(\M) \hookrightarrow \Set$: pullbacks and pushouts along monomorphisms are computed on underlying sets and remain definable (images of definable maps are definable), so the Van Kampen condition reduces to the known adhesivity of $\Set$.
Adhesivity ensures that pushouts along open immersions preserve monomorphisms---the condition will be used for cogerm gluing (Theorem~\ref{thm:gluing-Fj}).

\item \emph{Finite cohomology.} Edmundo and Peatfield~\cite{EdmundoPeatfield2008} develop \v{C}ech cohomology for definable sheaves over o-minimal structures, with finite admissible covers and finite simplicial nerves.
Our site $\Sys_{\mathrm{oi}}$ is not directly a definable site in their sense (it is a category of systems, not a single definable space), but coverings of a fixed system have finite definable nerves, making their techniques a natural starting point for the cohomological obstructions discussed in \S\ref{sec:discussion}.
\end{enumerate}

Each definable set $X \subseteq \Rr^n$ inherits a topology from $\Rr^n$.
A \textbf{definable open immersion} $\iota : U \hookrightarrow X$ is a definable injection whose image is a definable open subset.

\begin{ex}\label{ex:o-minimal-examples}
Standard examples of o-minimal structures: $\Rr_{\mathrm{alg}}$ (semialgebraic sets---polynomials only), $\Rr_{\mathrm{an}}$ (globally subanalytic sets---convergent power series), $\Rr_{\mathrm{an},\exp}$ (adding the exponential function).
Neural network activation functions (ReLU, sigmoid, softmax) are definable in $\Rr_{\mathrm{an},\exp}$, so they naturally fit in this framework.
Indeed, o-minimality has found direct applications in deep learning theory: Ji and Telgarsky~\cite{JiTelgarsky2020} use tools from o-minimal analysis to prove directional convergence of gradient flow on deep homogeneous networks, and Bolte and Pauwels~\cite{BoltePauwels2021} exploit Whitney stratification of definable sets for convergence in values of stochastic gradient methods.
\end{ex}

\begin{rem}\label{rem:why-not-set}
Working over $\Set$ would allow all injections as morphisms, with no topological control: any partition of $S_b$ gives a covering, and the geometry of the state space plays no role.
In $\mathrm{Def}(\M)$, the o-minimal topology provides a canonical class of ``local'' inclusions, finite coverings with finitely many connected components, and the tameness needed for the sheaf characterisation of Theorem~\ref{thm:singleton-ns}.
The cost is that $\mathrm{Def}(\M)$ is not cartesian closed.

Morphisms in $\mathrm{Def}(\M)$ need not be continuous; we write $\mathrm{Def}_c(\M)$ for the wide subcategory with continuous definable maps.
The site construction works over both $\mathrm{Def}(\M)$ and $\mathrm{Def}_c(\M)$, since open immersions are continuous by nature (inclusions of open subsets with the subspace topology).
Adhesivity, separation, and cogerm gluing use only $\mathrm{Def}(\M)$.
The sheaf characterisation (Theorem~\ref{thm:singleton-ns}) requires $\mathrm{Def}_c(\M)$: its proof uses compactness and normality of the input space, which depend on maps being continuous.
\end{rem}

\subsection{The site $\Sys_{\mathrm{oi}}$}\label{subsec:site}

We now combine the system structure with the o-minimal geometry.

\begin{defi}[Definable open immersion of systems]\label{def:open-immersion}
A morphism $\iota : \cS' \hookrightarrow \cS$ in $\Sys$ (over $\mathrm{Def}(\M)$) is called a \textbf{definable open immersion} if each component $\iota_b, \iota_a, \iota_I, \iota_O$ is a definable open immersion.
Write $\Sys_{\mathrm{oi}}$ (``open immersions'') for the wide subcategory with these as morphisms.
\end{defi}

\begin{defi}[Coverings]\label{def:coverings}
A finite family $\{m_\alpha : \cS_\alpha \hookrightarrow \cS\}$ of definable open immersions is a \textbf{covering} if the induced maps are jointly surjective:
$\bigsqcup_\alpha (S_{\alpha,b} \times I_\alpha) \twoheadrightarrow S_b \times I$
and
$\bigsqcup_\alpha (S_{\alpha,a} \times O_\alpha) \twoheadrightarrow S_a \times O$.
\end{defi}

Joint surjectivity on $S_b \times I$ is strictly stronger than separate-factor surjectivity ($\bigsqcup S_{\alpha,b} \twoheadrightarrow S_b$ and $\bigsqcup I_\alpha \twoheadrightarrow I$ separately): the product condition ensures that for every $(s, i) \in S_b \times I$, some single patch covers both $s$ and $i$.

\begin{prop}\label{prop:site-axioms}
$\Sys_{\mathrm{oi}}$ has pullbacks (computed componentwise as intersections of definable opens), and the coverings above define a Grothendieck pretopology~\cite[Ch.~III]{MacLaneMoerdijk1994}, making $(\Sys_{\mathrm{oi}}, \mathrm{Cov})$ a site.
\end{prop}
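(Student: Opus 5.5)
We prove the two claims in order: first that pullbacks exist and are computed componentwise as intersections of images, then the three pretopology axioms.

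\emph{Pullbacks.} Let $m:\cS_1\hookrightarrow\cS$ and $n:\cS_2\hookrightarrow\cS$ be definable open immersions. For each component $x\in\{b,a,I,O\}$ set $P_x := m_x(S_{1,x})\cap n_x(S_{2,x})$. This is a definable open subset of the corresponding component of $\cS$, being a finite intersection of definable opens.

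We need dynamics $\alpha_P : P_b\times P_I\to P_a\times P_O$ making $P$ a subsystem of both $\cS_1$ and $\cS_2$. Take a point $(s,i)\in P_b\times P_I$. It lies in the image of $S_{1,b}\times I_1$, so the commuting square for $m$ puts $\alpha(s,i)$ in $m_a(S_{1,a})\times m_O(O_1)$. The same argument with $n$ puts $\alpha(s,i)$ in $n_a(S_{2,a})\times n_O(O_2)$. Hence $\alpha$ restricts to a map $P_b\times P_I\to P_a\times P_O$; it is definable because its graph is an intersection of definable sets.

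The projections to $\cS_1$ and $\cS_2$ are $m^{-1}$ and $n^{-1}$ restricted to $P$. These are definable open immersions, since $m_x$ is a definable homeomorphism onto an open set. They are system morphisms by injectivity of $m$ and $n$ together with the commuting squares.

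For the universal property, take a cone $g_1:\mathcal T\to\cS_1$, $g_2:\mathcal T\to\cS_2$ in $\Sys_{\mathrm{oi}}$ with $mg_1=ng_2$. Its common image lands in $P$, which gives the unique factorisation. The factorisation is again an open immersion, since an open subset of $\cS$ contained in $P$ is open in $P$. I expect the main obstacle here is the dynamics compatibility just described, which is where heterogeneity (the separate $b$ and $a$ components) matters. Continuity holds automatically if we work over $\mathrm{Def}_c(\M)$.

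\emph{Pretopology axioms.}
\begin{enumerate}
\item \emph{Isomorphisms.} The singleton $\{\id_\cS\}$, or any isomorphism, is trivially jointly surjective.
\item \emph{Stability under pullback.} Let $\{m_\alpha\}$ cover $\cS$ and let $g:\mathcal T\hookrightarrow\cS$. The pulled-back family is $\{\mathcal T\times_\cS\cS_\alpha\to\mathcal T\}$. Its image in $T_b\times T_I$ is $(g_b\times g_I)^{-1}$ of the union of the images of $S_{\alpha,b}\times I_\alpha$. Because the pullbacks are componentwise intersections and $(A\times B)\cap(C\times D)=(A\cap C)\times(B\cap D)$, this preimage is all of $T_b\times T_I$ by joint surjectivity. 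The same argument applies on $T_a\times T_O$. The family is still finite.
\item \emph{Transitivity.} Suppose $\{m_\alpha\}$ covers $\cS$ and, for each $\alpha$, $\{m_{\alpha\beta}\}$ covers $\cS_\alpha$. Every point $(s,i)$ lies in some $S_{\alpha,b}\times I_\alpha$, and its preimage there lies in some $S_{\alpha\beta,b}\times I_{\alpha\beta}$. Composites of definable open immersions are definable open immersions, because compositions of definable maps are definable and open maps compose. The same holds on the after side, and a finite union of finite families is finite.
\end{enumerate}
Together these give the pretopology in the sense of~\cite[Ch.~III]{MacLaneMoerdijk1994}, so $(\Sys_{\mathrm{oi}},\mathrm{Cov})$ is a site.
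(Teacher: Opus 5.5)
Your proof is correct and follows the same route as the paper's sketch. You dispatch (T1) trivially. For (T2) you write the pulled-back patches as componentwise intersections, so that joint surjectivity on $S_b\times I$ and $S_a\times O$ restricts via $(A\times B)\cap(C\times D)=(A\cap C)\times(B\cap D)$. For (T3) you compose open immersions and chain joint surjectivity.

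You go further than the paper's sketch, which only asserts that pullbacks exist: you check that the componentwise intersection carries the restricted dynamics and has the universal property in $\Sys_{\mathrm{oi}}$. That check relies on reading open immersions as definable homeomorphisms onto open subsets, which is the reading the paper adopts in Remark~\ref{rem:why-not-set}. One caveat applies to both you and the paper: these intersections can have empty $I$ or $O$ components, which sits uneasily with the standing convention that systems have non-empty interfaces.
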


\begin{proof}[Proof sketch]
(T1)~Isomorphisms cover trivially.\par\noindent
(T2)~Given a covering $\{m_\alpha\}$ of $\cS$ and $n : \cS' \hookrightarrow \cS$, the pullback has components $(S_{\alpha,b} \cap S_b') \times (I_\alpha \cap I')$; joint surjectivity restricts from $S_b \times I$ to $S_b' \times I'$.\par\noindent
(T3)~Compositions of definable open immersions are definable open immersions, and joint surjectivity is transitive.
\end{proof}

\begin{theorem}\label{thm:adhesivity}
  For all $I$, $O$ in $\mathrm{Def}(\M)$, the category $\Sys_{\mathrm{ho}}(I,O)$ is adhesive.
\end{theorem}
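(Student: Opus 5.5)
We realize $\Sys_{\mathrm{ho}}(I,O)$ as a category of coalgebras, or more precisely an inserter/comma-type construction over $\mathrm{Def}(\M)$, and then transfer adhesivity from $\mathrm{Def}(\M)$, which is adhesive by the discussion in \S\ref{subsec:o-minimal}. Objects are pairs $(S,\alpha)$ with $\alpha : S\times I \to S\times O$. Morphisms are definable $f : S\to S'$ with $\alpha'\circ(f\times\id_I) = (f\times\id_O)\circ\alpha$; here $I,O$ are fixed and $f_b=f_a$. The model is the standard result that for a functor $F$ preserving pullbacks, the category $F$-$\Coalg$ is adhesive whenever the base is. Since $\mathrm{Def}(\M)$ is not cartesian closed, we do not write this as a coalgebra category for $(-\times O)^I$. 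Instead we work with the functors $L = (-)\times I$ and $R=(-)\times O$, and view $\Sys_{\mathrm{ho}}(I,O)$ as the category of pairs $(S, \alpha: LS\to RS)$.

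\textbf{Step 1 (creation of colimits and limits).} Pushouts along monomorphisms and pullbacks in $\Sys_{\mathrm{ho}}(I,O)$ are computed in $\mathrm{Def}(\M)$ on state objects.

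For pullbacks: $R = -\times O$ preserves pullbacks, so given a cospan $(S_1,\alpha_1)\to(S_3,\alpha_3)\leftarrow(S_2,\alpha_2)$ with pullback $P$, we have $RP = RS_1\times_{RS_3}RS_2$. The maps $LP\to LS_i \xrightarrow{\alpha_i} RS_i$ agree in $RS_3$, so they induce a unique $\alpha_P : LP\to RP$, and the universal property follows.

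For pushouts of $S_1 \leftarrow S_0 \rightarrowtail S_2$ with $m$ monic: $L=-\times I$ preserves pushouts along monos in $\mathrm{Def}(\M)$. This holds because products with a fixed object are computed on underlying sets, $\Set$ has this property, and the embedding into $\Set$ creates these pushouts. Hence $LQ$ is the pushout of the $LS_i$, and $\alpha_Q$ is induced by the composites $LS_i\to RS_i\to RQ$. Monos in $\Sys_{\mathrm{ho}}(I,O)$ are exactly those with monic state map, since the forgetful functor is faithful and preserves pullbacks (kernel pairs). So "pushout along a mono" means the same thing upstairs and downstairs.

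\textbf{Step 2 (Van Kampen).} Consider a commutative cube in $\Sys_{\mathrm{ho}}(I,O)$ whose bottom face is a pushout along a mono and whose back faces are pullbacks. By Step 1 the forgetful functor $U$ preserves and reflects these pullbacks and pushouts: it creates them, because each structure map is uniquely determined. Applying $U$ gives a cube in $\mathrm{Def}(\M)$ of the same shape. Adhesivity of $\mathrm{Def}(\M)$ then says that the top face is a pushout if and only if the front faces are pullbacks, and this equivalence lifts back along $U$. The same reflection argument shows that pushouts along monos are stable under pullback.

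\textbf{Main obstacle.} The delicate point is the claim in Step 1 that $L=-\times I$ preserves pushouts along monos in $\mathrm{Def}(\M)$, with the pushout object still definable. Pushouts in $\mathrm{Def}(\M)$ are quotients, and we must check that the definable presentation $Q\times I$ agrees with the pushout of the $S_i\times I$. I would verify this by the explicit construction: for $S_0\rightarrowtail S_2$, take $Q = S_1 \sqcup (S_2\setminus S_0)$ embedded definably, e.g.\ as a disjoint union in a higher $\Rr^n$. Then $Q\times I = (S_1\times I)\sqcup((S_2\setminus S_0)\times I)$, which is literally the pushout of the products. This sidesteps any quotient-definability issue.
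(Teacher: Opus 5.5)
Your proof is correct and follows the same route as the paper: the forgetful functor creates pullbacks (because $-\times I$ and $-\times O$ preserve them) and pushouts along monomorphisms (because $-\times I$ preserves them), and the Van Kampen condition then lifts from $\mathrm{Def}(\M)$. The one difference is how you show that $-\times I$ preserves pushouts along monos: you use the explicit description $S_1\sqcup(S_2\setminus m(S_0))$, which is specific to $\mathrm{Def}(\M)$, whereas the paper obtains $P\times I\cong(A\times I)\sqcup_{C\times I}(B\times I)$ directly from the Van Kampen property of the base, applied to the cube of product projections $-\times I\to -$ (all of whose vertical faces are pullbacks); so your ``main obstacle'' is not really an obstacle, and the argument works over any adhesive base with finite products.
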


This is a special case of a more general result (Appendix~\ref{app:adhesivity}):

\begin{theorem}\label{thm:coalg_adhesivity_body}
Let $\CC$ be an adhesive category~\cite{LackSobocinski2005} with finite products, and let $H : \CC \to \CC$ be a pullback-preserving endofunctor. Then the category of coalgebras $\Coalg(H)$ is adhesive.
\end{theorem}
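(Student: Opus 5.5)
The plan is to verify directly the three ingredients of adhesivity from Lack--Soboci\'nski~\cite{LackSobocinski2005} for $\Coalg(H)$: existence of pullbacks, existence of pushouts along monomorphisms, and the Van Kampen property for the latter. The guiding principle is that the forgetful functor $U : \Coalg(H) \to \CC$ creates colimits, and, because $H$ preserves pullbacks, it also creates pullbacks.

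\emph{Step 1: pullbacks.} Given coalgebra maps $f : (X,\xi) \to (Z,\zeta)$ and $g : (Y,\upsilon) \to (Z,\zeta)$, I will form the pullback $P = X \times_Z Y$ in $\CC$. Since $H$ preserves pullbacks, $HP \cong HX \times_{HZ} HY$. The maps $\xi \circ \pi_X$ and $\upsilon \circ \pi_Y$ are compatible over $HZ$, so they induce a unique structure map $\rho : P \to HP$. Uniqueness of maps into $HP$ then shows that $(P,\rho)$ is the pullback in $\Coalg(H)$ and that $U$ preserves and reflects it. A consequence I will record: a coalgebra map is monic iff its kernel pair is trivial iff its underlying map is monic. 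So monos in $\Coalg(H)$ are exactly the underlying monos.

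\emph{Step 2: pushouts along monos.} Given a span $B \leftarrow A \rightarrowtail C$ of coalgebras with $m : A \to C$ monic, I will take the pushout $D = B +_A C$ in $\CC$, which exists by adhesivity. The structure map $D \to HD$ is induced by $B \to HB \to HD$ and $C \to HC \to HD$; these agree on $A$ by naturality. The usual argument shows that $U$ creates colimits for coalgebras, so $D$ is the pushout in $\Coalg(H)$. By Step 1 its legs are monos where expected.

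\emph{Step 3: Van Kampen.} Consider a commutative cube in $\Coalg(H)$ whose bottom face is such a pushout and whose back faces are pullbacks. Apply $U$. By Steps 1--2, $U$ preserves these pullbacks and pushouts, so the image is a Van Kampen cube situation in $\CC$. Adhesivity of $\CC$ then gives: the top face is a pushout in $\CC$ iff the front faces are pullbacks in $\CC$. Both properties are reflected by $U$, because $U$ creates pullbacks and creates pushouts along monos (the top face is a span along a mono, since pullbacks of monos are monos). The equivalence therefore lifts to $\Coalg(H)$.

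\emph{Main obstacle.} The main obstacle is the reflection step in Step 3. I must check that if the top face is a pushout in $\CC$, then it is one in $\Coalg(H)$ with the given coalgebra structure on the apex. This means the induced structure on the $\CC$-pushout coincides with the existing structure on the top vertex, which follows from uniqueness of maps out of a pushout.

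\emph{Application to Theorem~\ref{thm:adhesivity}.} In $\mathrm{Def}(\M)$, define $H(X) = (X \times O)^I$. This exponential is unavailable, since $\mathrm{Def}(\M)$ is not cartesian closed. I will therefore instead prove the homogeneous-Mealy case directly by the same three steps, replacing ``$H$ preserves pullbacks'' with the observation that $(-)\times I$ and $(-)\times O$ preserve pullbacks. Concretely, a structure $\alpha : S\times I \to S\times O$ on a pullback is induced componentwise using $(X\times_Z Y)\times O \cong (X\times O)\times_{Z\times O}(Y\times O)$. For pushouts, I will need $(B+_A C)\times I$ to be the pushout of the $\times I$ span; this holds because in adhesive categories pushouts along monos are stable under pullback, and $-\times I$ is pullback along $I \to 1$.
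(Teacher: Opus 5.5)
Your proposal is correct and follows essentially the same route as the paper. Both arguments create pullbacks in $\CC$ using that $H$ preserves them, identify monos in $\Coalg(H)$ with underlying monos via kernel pairs, and build pushouts along monos as $\CC$-pushouts with the induced (unique) structure map. Both then transfer the Van Kampen equivalence along the forgetful functor, since it preserves and reflects these pullbacks and pushouts. Your direct treatment of the homogeneous Mealy case, using that $-\times I$ is pullback along $I \to 1$ and so preserves pushouts along monos, likewise matches the paper's separate proof of Theorem~\ref{thm:adhesivity}.
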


For $\CC = \Set$, this was established by Padberg~\cite{Padberg2017}; the general case appears to be new.
Adhesivity ensures colimit constructions in gluing proofs produce well-defined homogeneous systems; combined with the strict initial object (the empty definable set), it also gives extensivity~\cite[Lemma~4.1]{LackSobocinski2005}.

\section{Judges and Presheaves of Explanations}\label{sec:judges-presheaves}

An \emph{explanation} of a system translates its dynamics into human-interpretable terms.
Here, we model human-interpretability as being relative to a \emph{judge}: a choice of interpretable interface, along with a map from the system's interface to the interpretable one.

\subsection{Judges and sections}\label{subsec:judges}

\begin{defi}[Judge]\label{def:judge}
A \textbf{judge} for $\cS = (S_b, S_a, I, O, \alpha)$ is a pair $j = (j_I, j_O)$ of definable maps $j_I : I \to I'$, $j_O : O \to O'$, where $(I', O')$ is the \textbf{interpretable interface}.
\end{defi}

\begin{constr}[Presheaf of explanations]\label{constr:presheaf}
Fix a judge $j : (I, O) \to (I', O')$.
For a definable open immersion $m : \cS_U \hookrightarrow \cS$, define $\tilde{\FF}_j(\cS_U)$ as the set of pairs $(\cS', \psi)$ where $\cS' = (S_b', S_a', I', O', \alpha')$ and $\psi : \cS_U \to \cS'$ is a morphism with interface components $\psi_I = j_I \circ m_I$ and $\psi_O = j_O \circ m_O$.
The dynamics square
\[
\begin{tikzcd}
S_{U,b} \times I_U \arrow[r, "\alpha_U"] \arrow[d, "\psi_b \times (j_I \circ m_I)"'] & S_{U,a} \times O_U \arrow[d, "\psi_a \times (j_O \circ m_O)"] \\
S_b' \times I' \arrow[r, "\alpha'"'] & S_a' \times O'
\end{tikzcd}
\]
says that $\cS'$ faithfully represents the dynamics of $\cS_U$ through the judge.
Restriction is by precomposition: $(\cS', \psi) \mapsto (\cS', \psi \circ n)$ for $n : \cS_{U'} \hookrightarrow \cS_U$.
\end{constr}

The presheaf $\tilde{\FF}_j$ on the slice site $(\Sys/\cS)_{\mathrm{oi}}^{\mathrm{op}}$ is a sheaf (separation and gluing reduce to joint surjectivity, since sections are literal pairs $(\cS', \psi)$), but too fine: it distinguishes explanations that differ only in inessential internal structure.
The question is which quotient to take.

\subsection{Equivalence relations and the presheaf hierarchy}\label{subsec:hierarchy}

\begin{conv}\label{conv:homogeneous}
Throughout the remainder of this paper, all \textbf{explanatory systems} $\cS'$ (targets of sections) are homogeneous ($S_b' = S_a' =: S'$), so that dynamics can be iterated and behavioral equivalence is well-defined.
The systems being explained remain heterogeneous, for they can be given by patches of before-states and after-states that don't coincide.
\end{conv}

\begin{defi}[Cogerm equivalence]\label{def:cogerm}
Two sections $(\cS_1', \psi_1)$ and $(\cS_2', \psi_2)$ over $\cS_U$ are \textbf{cogerm-equivalent} if there exists a homogeneous system $\cS''$ with interface $(I', O')$, monomorphisms $i_k : \cS'' \hookrightarrow \cS_k'$ ($k=1,2$) in $\Sys_{\mathrm{ho}}(I', O')$ (i.e.\ with identity interface components and $i_{k,b} = i_{k,a}$), and a~$\phi : \cS_U \to \cS''$ (necessarily unique) with $i_k \circ \phi = \psi_k$.
Write $(\cS_1', \psi_1) \sim (\cS_2', \psi_2)$.
\end{defi}

The name reflects an analogy with germs: two explanations agree if they share a common ``essential core'' on the target side.
The structure is parallel to the span-based characterisation of bisimulation in Joyal--Nielsen--Winskel~\cite{JNW1996}: both use spans as equivalence witnesses, but with monomorphisms in place of open maps and on the target side rather than the path side.

The quotient $\FF_j := \tilde{\FF}_j / {\sim}$ satisfies gluing but not separation.

\begin{defi}[Behavioral equivalence]\label{def:beh-equiv}
Two explanations $(\cS_1', \psi_1)$ and $(\cS_2', \psi_2)$ over $\cS_U$ are \textbf{behaviorally equivalent}, written $\approx$, if for every $x \in S_{U,b}$, every $n$, and every $(i_1', \ldots, i_n') \in (I')^n$, the output sequences from $\psi_{1,b}(x)$ in $\cS_1'$ and $\psi_{2,b}(x)$ in $\cS_2'$ coincide (well-defined since $\cS_k'$ is homogeneous by Convention~\ref{conv:homogeneous}).
\end{defi}

Cogerm equivalence implies behavioral equivalence, but not conversely (e.g., a $3$-cycle and a $6$-cycle with the same constant output are behaviorally equivalent but not cogerm-equivalent).
For homogeneous Mealy machines over $\Set$, behavioral equivalence is related to the standard coalgebraic notion of equality of images in the final coalgebra for $H(X) = (X \times O')^{I'}$~\cite{Rutten_2000}.
From the interpretability perspective, behavioral equivalence is a natural relation: an explanation serves to predict behavior through the judge, and two explanations making identical predictions are interchangeable.

\begin{defi}[Restricted-interface presheaf]\label{def:ri}
For $m : \cS_U \hookrightarrow \cS$, define $I'_U := \mathrm{im}(j_I \circ m_I) \subseteq I'$.
The \textbf{restricted-interface presheaf} $\FF_j^{\mathrm{ri}}(\cS_U)$ consists of explanations $(\cS', \psi)$ where $\cS'$ has input interface $I'_U$ (rather than all of $I'$), modulo restricted-behavioral equivalence (testing input sequences in $(I'_U)^n$ only).
\end{defi}

The natural transformation $\eta : \FF_j^{\mathrm{beh}} \to \FF_j^{\mathrm{ri}}$ given by restricting the input interface is surjective on sections (any section of $\FF_j^{\mathrm{ri}}$ over $\cS_U$ extends to one of $\FF_j^{\mathrm{beh}}$ by choosing arbitrary dynamics on $I' \setminus I'_U$, which does not affect the restricted-behavioral class) and an isomorphism when $I'_U = I'$.
In practice, the distinction maps onto two modes of interpretability: sections of $\FF_j^{\mathrm{beh}}$ would correspond to \emph{mechanistic explanations} (``this feature detects X \emph{across all inputs}''), while sections of $\FF_j^{\mathrm{ri}}$ correspond to \emph{local explanations} (``this circuit implements Y \emph{on distribution $\mathcal{D}$}'').

We write $\FF_j^{\mathrm{beh}} := \tilde{\FF}_j / {\approx}$ for the \textbf{behavioral presheaf}.
The hierarchy is:
\[
\tilde{\FF}_j \twoheadrightarrow \FF_j \twoheadrightarrow \FF_j^{\mathrm{beh}} \overset{\eta}{\twoheadrightarrow} \FF_j^{\mathrm{ri}}.
\]
$\tilde{\FF}_j$ is a sheaf but too fine; the three quotients capture progressively coarser notions of explanatory content.
None of the quotients is a sheaf (\S\ref{sec:separation}--\ref{sec:gluing}).

\section{Separation}\label{sec:separation}

The separation axiom asks: if two global sections restrict to the same class on every member of a covering, are they globally equivalent?

\begin{theorem}[Behavioral presheaf is separated]\label{thm:separation}
$\FF_j^{\mathrm{beh}}$ satisfies the separation axiom on $(\Sys_{\mathrm{oi}}, \mathrm{Cov})$.
\end{theorem}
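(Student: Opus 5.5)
The argument uses only the pointwise nature of behavioral equivalence together with joint surjectivity of coverings on before-states. Fix a system $\cS_U$ (working in the slice over $\cS$; restriction from $\cS_U$ to a patch is precomposition with an open immersion). Let $\{n_\alpha : \cS_\alpha \hookrightarrow \cS_U\}$ be a covering, and let $(\cS_1',\psi_1)$ and $(\cS_2',\psi_2)$ be sections over $\cS_U$ whose restrictions $(\cS_k', \psi_k\circ n_\alpha)$ are behaviorally equivalent for every $\alpha$. We must show $(\cS_1',\psi_1)\approx(\cS_2',\psi_2)$.

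First, unwind Definition~\ref{def:beh-equiv} for the restricted sections. The restriction $(\cS_k',\psi_k\circ n_\alpha)$ has the same homogeneous target $\cS_k'$, with full input interface $I'$, and its before-state map is $\psi_{k,b}\circ n_{\alpha,b}$. So equivalence over $\cS_\alpha$ says: for every $y\in S_{\alpha,b}$, every $n$, and every $(i_1',\dots,i_n')\in(I')^n$, the output sequences from $\psi_{1,b}(n_{\alpha,b}(y))$ in $\cS_1'$ and from $\psi_{2,b}(n_{\alpha,b}(y))$ in $\cS_2'$ coincide. The input sequences range over all of $(I')^n$, independent of the patch, because $\FF_j^{\mathrm{beh}}$ (unlike $\FF_j^{\mathrm{ri}}$) does not restrict the interface.

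Next, take an arbitrary $x\in S_{U,b}$. By Definition~\ref{def:coverings}, the family $\bigsqcup_\alpha S_{\alpha,b}\times I_\alpha\to S_{U,b}\times I_U$ is jointly surjective. Since every system has non-empty $I_U$, we may pick some $i\in I_U$ and find $\alpha$ and $(y,i_0)\in S_{\alpha,b}\times I_\alpha$ mapping to $(x,i)$. In particular $n_{\alpha,b}(y)=x$. Applying the previous step at $y$ gives exactly the required equality of output sequences from $\psi_{1,b}(x)$ and $\psi_{2,b}(x)$ for all input words. Since $x$ was arbitrary, $\psi_1$ and $\psi_2$ are behaviorally equivalent over $\cS_U$, which proves separation.

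The only delicate point is making sure nothing depends on the chosen representatives. Behavioral equivalence is a condition stated pointwise on before-states with the targets $\cS_k'$ fixed, so replacing a representative by a $\approx$-equivalent one changes nothing, and restriction is well defined on classes. There is also a subtlety about the role of the patches' own $I_\alpha$ and $O_\alpha$: the behaviour is tested entirely inside the explanatory systems $\cS_k'$ on inputs from $I'$, so the patches' interfaces enter only through the surjectivity on before-states. That surjectivity is the one step needing care, and it is handled by non-emptiness of $I_U$.
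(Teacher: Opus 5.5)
Your proof is correct and is essentially the paper's argument: behavioral equivalence is a pointwise condition on before-states $x \in S_{U,b}$ (with the target systems fixed), and joint surjectivity of the covering places every such $x$ in the image of some patch. Your use of non-emptiness of $I_U$ to project joint surjectivity from $S_{U,b}\times I_U$ down to $S_{U,b}$ makes explicit a step the paper's proof leaves implicit.
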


\begin{proof}
Behavioral equivalence is pointwise: $(\cS_1', \psi_1) \approx (\cS_2', \psi_2)$ iff $\Beh_{\cS_1'}(\psi_{1,b}(x)) = \Beh_{\cS_2'}(\psi_{2,b}(x))$ for every $x \in S_{U,b}$, where $\Beh$ denotes the map from states to infinite output trees.
If this holds on every $\cS_\alpha$ in a jointly surjective covering, then for every $x \in S_{U,b}$ there exists $\alpha$ with $x \in S_{\alpha,b}$, so the behaviours agree at $x$.
\end{proof}

The cogerm presheaf $\FF_j$ does \emph{not} satisfy separation: local cogerm witnesses may contain ``extra states'' outside the image of the section, and these extra states can disagree across patches.
The passage to the behavioral quotient eliminates this problem by testing only observable output sequences.

\begin{defi}[$j$-full covering]\label{def:j-full}
A covering $\{m_\alpha : \cS_\alpha \hookrightarrow \cS\}$ is \textbf{$j$-full} if $I'_{U_\alpha} = I'_U$ for every $\alpha$, i.e.\ each patch's inputs hit every fiber of $j_I$.
Coverings where $m_{I,\alpha} = \id_I$, which we call \textbf{data-local}, are automatically $j$-full.
\end{defi}

\begin{prop}[Separation for $\FF_j^{\mathrm{ri}}$]\label{prop:ri-separation}
$\FF_j^{\mathrm{ri}}$ satisfies separation for any $j$-full covering.
\end{prop}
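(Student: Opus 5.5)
The plan is to mirror the proof of Theorem~\ref{thm:separation}, noting first that restricted-behavioral equivalence is also pointwise in the states of $S_{U,b}$. Fix $m : \cS_U \hookrightarrow \cS$, a $j$-full covering $\{m_\alpha : \cS_\alpha \hookrightarrow \cS_U\}$, and two sections $(\cS_1', \psi_1), (\cS_2', \psi_2)$ of $\FF_j^{\mathrm{ri}}(\cS_U)$. Both explanatory systems have input interface $I'_U$. By Definition~\ref{def:ri}, $(\cS_1',\psi_1)$ and $(\cS_2',\psi_2)$ are equivalent iff for every $x \in S_{U,b}$ the restricted behaviours $\Beh^{I'_U}_{\cS_1'}(\psi_{1,b}(x))$ and $\Beh^{I'_U}_{\cS_2'}(\psi_{2,b}(x))$ coincide. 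These are the output trees indexed by finite sequences in $(I'_U)^n$.

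The first step is to make restriction in $\FF_j^{\mathrm{ri}}$ explicit, since the interface shrinks. Restricting along $m_\alpha$ sends $(\cS_k', \psi_k)$ to the system obtained from $\cS_k'$ by restricting inputs to $I'_{U_\alpha} \subseteq I'_U$, together with $\psi_k \circ m_\alpha$. One checks this is a section over $\cS_\alpha$: the dynamics square still commutes, because the image of $j_I \circ m_I \circ m_{\alpha,I}$ lies in $I'_{U_\alpha}$. One also checks that it respects restricted-behavioral equivalence, since testing fewer input sequences preserves equality. Here $j$-fullness is used. Since $I'_{U_\alpha} = I'_U$, restriction does not shrink the input interface at all, and the restricted system on each patch is literally $\cS_k'$ with the same set of test sequences $(I'_U)^n$.

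With this in hand the argument follows Theorem~\ref{thm:separation}. Suppose the restrictions agree in $\FF_j^{\mathrm{ri}}(\cS_\alpha)$ for every $\alpha$. Then for each $y \in S_{\alpha,b}$ we have
\[
\Beh^{I'_U}_{\cS_1'}(\psi_{1,b}(m_{\alpha,b}(y))) = \Beh^{I'_U}_{\cS_2'}(\psi_{2,b}(m_{\alpha,b}(y))).
\]
Given any $x \in S_{U,b}$, pick any $i \in I_U$ (non-empty by convention). Joint surjectivity on $S_{U,b} \times I_U$ yields some $\alpha$ and some $y$ with $m_{\alpha,b}(y) = x$. Hence the full $I'_U$-behaviours agree at $x$, and the two global sections are equivalent.

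The main obstacle is the restriction step, and the way to settle it is to say precisely why $j$-fullness is needed. Without it, agreement on a patch only tests sequences in $(I'_{U_\alpha})^n$. A state $x$ covered only by patches with small $I'_{U_\alpha}$ could then have behaviours differing on mixed sequences whose letters come from different fibers of $j_I$, and these sequences are seen by no single patch. Under $j$-fullness this gap disappears, so the remaining verification is routine: check that the restriction maps are well defined on equivalence classes, which uses that they commute with $\Beh$ on the unchanged systems $\cS_k'$.
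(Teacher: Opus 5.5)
Your proof is correct and takes essentially the same route as the paper. In both, $j$-fullness makes the local and global restricted-behavioral tests range over the same sequences $(I'_U)^n$, and joint surjectivity on $S_{U,b}\times I_U$, projected to $S_{U,b}$ using non-empty inputs, covers every state; your explicit handling of the restriction maps on the shrinking interface usefully spells out what the paper leaves implicit.
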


\begin{proof}
When $I'_{U_\alpha} = I'_U$ for all $\alpha$, local and global restricted-behavioral equivalences test the same input sequences $(I'_U)^n$.
Joint surjectivity on $S_b$ (by projection from $S_b \times I$) completes the argument.
\end{proof}

Separation can fail when the covering splits inputs across patches:
\begin{contre-ex}\label{cex:ri-sep-fail}
Consider $\cS = (\{s_1, s_2\}, \{s_1, s_2\}, \{a,b\}, \{0,1\}, \alpha)$ with $\alpha(s_1, a) = (s_1, 0)$, $\alpha(s_1, b) = (s_2, 0)$, $\alpha(s_2, a) = (s_1, 1)$, $\alpha(s_2, b) = (s_2, 1)$, and $j = \id$.
The input-splitting covering $\cS_1 = \cS|_{\{a\}}$, $\cS_2 = \cS|_{\{b\}}$ has $I'_{U_1} = \{a\} \neq \{b\} = I'_{U_2}$.
An alternative explanation $\cS_2'$ with different internal routing agrees with the identity explanation on each patch's pure input sequences, but disagrees on the mixed sequence $(a,b)$.
\end{contre-ex}

\section{The Gluing Landscape}\label{sec:gluing}

While the cogerm presheaf $\FF_j$ always satisfies gluing (Theorem~\ref{thm:gluing-Fj}), the behavioral and restricted-interface presheaves do not.
We map out the landscape of positive and negative results.

\subsection{Gluing for $\FF_j$}\label{subsec:gluing-Fj}

\begin{theorem}\label{thm:gluing-Fj}
$\FF_j$ satisfies the gluing axiom: given a covering $\{m_\alpha : \cS_\alpha \hookrightarrow \cS\}$ and a compatible family of sections $[\cS_\alpha', \psi_\alpha] \in \FF_j(\cS_\alpha)$, there exists $[\cS', \psi] \in \FF_j(\cS)$ restricting to each $[\cS_\alpha', \psi_\alpha]$.
\end{theorem}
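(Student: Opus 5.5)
The plan is to build the glued explanatory system as a colimit of the local explanatory systems along the cogerm witnesses that realise compatibility, using adhesivity of $\Sys_{\mathrm{ho}}(I',O')$ (Theorem~\ref{thm:adhesivity}) to ensure the colimit is again a homogeneous system into which each $\cS_\alpha'$ embeds monomorphically.

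First I unpack compatibility. For each pair $\alpha,\beta$, let $\cS_{\alpha\beta} = \cS_\alpha \times_{\cS} \cS_\beta$ be the pullback (Proposition~\ref{prop:site-axioms}). Compatibility means $(\cS_\alpha', \psi_\alpha \circ n_{\alpha\beta}) \sim (\cS_\beta', \psi_\beta \circ n_{\beta\alpha})$. So there is a homogeneous $\cS''_{\alpha\beta}$ with monos $i^\alpha_{\alpha\beta} : \cS''_{\alpha\beta} \hookrightarrow \cS_\alpha'$ and $i^\beta_{\alpha\beta} : \cS''_{\alpha\beta} \hookrightarrow \cS_\beta'$ in $\Sys_{\mathrm{ho}}(I',O')$, and a $\phi_{\alpha\beta}$ factoring both restricted sections. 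I then form the colimit $\cS'$ in $\Sys_{\mathrm{ho}}(I',O')$ of the diagram made of all $\cS_\alpha'$ and all spans $\cS_\alpha' \hookleftarrow \cS''_{\alpha\beta} \hookrightarrow \cS_\beta'$. Since the covering is finite, this is a finite colimit. I compute it by iterated pushouts along monomorphisms, which exist and are stable by adhesivity. On state objects it is the quotient of $\bigsqcup_\alpha S_\alpha'$ identifying $i^\alpha_{\alpha\beta}(t)$ with $i^\beta_{\alpha\beta}(t)$. This set is definable because the identifications are images of definable maps. The dynamics are well defined because each $i$ is a morphism of homogeneous systems with identity interface. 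The coprojections $c_\alpha : \cS_\alpha' \to \cS'$ are morphisms, and in the pairwise case they are monos, since pushouts along monos preserve monos in an adhesive category.

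Next I define $\psi : \cS \to \cS'$. Its interface components are $j_I, j_O$. On a before-state $s \in S_b$ I pick $\alpha$ with $s \in S_{\alpha,b}$ and set $\psi_b(s) = c_\alpha(\psi_{\alpha,b}(s))$, and define $\psi_a$ analogously. Independence of $\alpha$ follows from the factorisation: if $s$ also lies in patch $\beta$, then $s \in S_{\alpha\beta,b}$, and $\psi_{\alpha,b}(s) = i^\alpha_{\alpha\beta}\phi_{\alpha\beta}(s)$ and $\psi_{\beta,b}(s) = i^\beta_{\alpha\beta}\phi_{\alpha\beta}(s)$ are identified in $\cS'$. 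Definability holds because the construction is piecewise over finitely many definable patches. The dynamics square commutes at $(s,i)$ because joint surjectivity on $S_b\times I$ gives a single patch containing both $s$ and $i$, where the square holds for $\psi_\alpha$ and is transported by $c_\alpha$. Finally, $(\cS',\psi)$ restricts to $[\cS_\alpha',\psi_\alpha]$. The witness is $\cS'' = \cS_\alpha'$ itself, with $\id$ into $\cS_\alpha'$, the mono $c_\alpha$ into $\cS'$, and $\phi = \psi_\alpha$.

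\textbf{Main obstacle.} The hard step is showing each $c_\alpha$ is a monomorphism once three or more patches interact. Chains of pairwise identifications $\alpha \to \beta \to \gamma \to \alpha$ could collapse two distinct states of $\cS_\alpha'$, because the chosen witnesses $\cS''_{\alpha\beta}$ need not satisfy a cocycle condition. My first attempt is to shrink each witness to the sub-system generated by $\phi_{\alpha\beta}(S_{\alpha\beta,b})$ under the dynamics, which is closed under transitions by homogeneity. Composite identifications then stay inside images of states coming from genuine points of $\cS$, where they are forced to agree by well-definedness of $\psi$. If this fails, the fallback is to choose an ordering of the patches and glue sequentially. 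At step $k$ I would glue $\cS_k'$ to the current colimit along a single mono witness, obtained by taking the union of the relevant witnesses via the Van Kampen property. Each step is then a pushout along a mono, so monos are preserved by adhesivity.
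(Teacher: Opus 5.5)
You reproduce the paper's construction: the colimit of the local targets along the overlap witnesses, with the restriction witnessed by $\cS_\alpha'$, $\id$ and $c_\alpha$. You also correctly isolate the step the paper passes over when it asserts that the finite colimit is an iterated pushout along monomorphisms, namely monicity of $c_\alpha$ once three patches interact. Neither of your repairs closes this gap, and no repair can, because the statement fails for three-member coverings.

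The relevant invariant is that monomorphisms in $\Sys_{\mathrm{ho}}(I',O')$ are injective on states (the kernel pair of a morphism is again a system). So if $(\cS_1',\psi_1)\sim(\cS_2',\psi_2)$ via $(\cS'',i_1,i_2,\phi)$, then $\psi_{1,b}(x)=\psi_{1,b}(y) \iff \phi_b(x)=\phi_b(y) \iff \psi_{2,b}(x)=\psi_{2,b}(y)$. Hence a cogerm class determines the equivalence relation $\ker\psi_b$ on $S_{U,b}$, and this persists for the equivalence relation generated by $\sim$. Equivalence relations do not glue, because transitivity is not a local condition.

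Concretely, take $I=I'=\{\bullet\}$, $O=O'=\{0\}$, $j=\id$, and $\cS$ with $S_b=S_a=\{s,t,u\}$ and $\alpha(x,\bullet)=(x,0)$. This set is finite, hence discrete, so every subset is open. Cover $\cS$ by the subsystems on $\{s,t\}$, $\{s,u\}$, $\{t,u\}$; their pairwise overlaps are the one-state subsystems. Choose local sections, with the same assignment on before- and after-states:
\begin{itemize}
\item on $\{s,t\}$, send $s,t\mapsto z$ in a one-state fixed-point system;
\item on $\{s,u\}$, send $s,u\mapsto y$ in another one-state fixed-point system;
\item on $\{t,u\}$, send $t\mapsto x_1$ and $u\mapsto x_2$ in a system of two distinct fixed points.
\end{itemize}
On each overlap both restrictions factor through a one-state fixed-point subsystem, so the family is compatible. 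A global section, however, would need $\psi_b(s)=\psi_b(t)$, $\psi_b(s)=\psi_b(u)$ and $\psi_b(t)\neq\psi_b(u)$, which is impossible. Your colimit here is a single point, and $c_{tu}$ identifies $x_1$ with $x_2$.

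This example defeats both repairs.
\begin{itemize}
\item \emph{Shrinking the witnesses.} The witnesses are already generated by the images of the $\phi_{\alpha\beta}$. The collapsed states are images of the \emph{distinct} points $t\neq u$, whereas well-definedness of $\psi$ only identifies $\psi_\alpha(x)$ with $\psi_\beta(x)$ for the same $x$.
\item \emph{Sequential gluing.} After gluing $\{s,t\}$ and $\{s,u\}$, the patch $\{t,u\}$ must be attached along the union $\{x_1,x_2\}$ of its two witnesses. The map of this union to the partial colimit sends both points to $z$, so it is not monic. Adhesivity then only makes the partial colimit embed, not $c_{tu}$.
\end{itemize}
Your argument, like the paper's, is sound when a single pushout with both legs monic suffices, e.g.\ for two-member coverings with one witness per pair. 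In general, gluing for $\FF_j$ does not hold as stated.
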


\begin{proof}[Proof sketch]
Choose cogerm witnesses $\cS_{\alpha\beta}''$ for compatibility on overlaps.
The global target $\cS'$ is the colimit of $\cdots \cS_\alpha' \xleftarrow{i_\alpha} \cS_{\alpha\beta}'' \xrightarrow{i_\beta} \cS_\beta' \cdots$ in $\Sys$; this exists as iterated pushouts along monomorphisms in the adhesive category $\mathrm{Def}(\M)$.
Adhesivity ensures the canonical maps $\cS_\alpha' \hookrightarrow \cS'$ remain monic; the compatible morphisms $\psi_\alpha$ then glue to $\psi : \cS \to \cS'$ by joint surjectivity.
\end{proof}

\subsection{Gluing fails for $\FF_j^{\mathrm{beh}}$}\label{subsec:gluing-fails}

\begin{contre-ex}[Gluing failure for $\FF_j^{\mathrm{beh}}$]\label{cex:beh-gluing-fail}
Let $\cS = (\{s_0, s_1, s_2, s_3\}, \{s_0, s_1, s_2, s_3\}, \{a,b\}, \{0,1\}, \alpha)$ with dynamics:
\begin{align*}
\alpha(s_0, a) &= (s_0, 0), & \alpha(s_0, b) &= (s_2, 0), \\
\alpha(s_1, a) &= (s_1, 1), & \alpha(s_1, b) &= (s_1, 1), \\
\alpha(s_2, a) &= (s_1, 1), & \alpha(s_2, b) &= (s_1, 1), \\
\alpha(s_3, a) &= (s_3, 1), & \alpha(s_3, b) &= (s_2, 1),
\end{align*}
with judge $j_I : \{a,b\} \to \{\bullet\}$, $j_O = \id$.
The covering $\cS_1 = \cS|_{\{s_0, s_1, s_2\}}$, $\cS_2 = \cS|_{\{s_1, s_2, s_3\}}$ has non-trivial overlap $\{s_1, s_2\}$ and is data-local.

Over $\cS_1$: the minimal realisation $M_1 = (\{p_0, p_1\}, \{\bullet\}, \{0,1\})$ maps $s_0 \mapsto p_0$ (constant output~$0$) and $s_1, s_2 \mapsto p_1$ (constant output~$1$).
Over $\cS_2$: the realisation $M_2 = (\{q\}, \{\bullet\}, \{0,1\})$ maps $s_1, s_2, s_3 \mapsto q$ (constant output~$1$).
Compatibility holds on $\{s_1, s_2\}$: both produce constant output~$1$.

A global section $(M, \bar{\psi})$ would require: $\bar{\psi}(s_0)$ producing output~$0$, and $\bar{\psi}(s_3)$ producing output~$1$.
The dynamics square at $(s_0, b)$ forces $\bar{\psi}_a(s_2)$ to continue with output~$0$; at $(s_3, b)$, it forces $\bar{\psi}_a(s_2)$ to continue with output~$1$.
Contradiction: a single after-state cannot produce both behaviours.
\end{contre-ex}

The obstruction is that the dynamics sends states from distinct behavioral regions ($s_0$ and $s_3$, in different patches) to a common after-state $s_2$.
Compatibility constrains the overlap $\{s_1, s_2\}$ but not the after-state reached from \emph{outside} the overlap.
The non-injectivity of $j_I$ is essential: collapsing inputs forces a single dynamics-square constraint per initial state.

\subsection{Positive results: connected and robustly disconnected fibers}\label{subsec:positive}

Write $\FF_{j,1}^{\mathrm{ri}} \subseteq \FF_j^{\mathrm{ri}}$ for the \textbf{stateless subpresheaf}: sections $(\cS', \psi)$ with $|S'| = 1$.
(Restriction preserves $|S'| = 1$, so this is indeed a subpresheaf; the notation generalises to $\FF_{j,N}^{\mathrm{ri}}$ for $|S'| \leq N$.)
A stateless section reduces to a function $h : I_U \to O'$ with $h = g \circ j_I$ for a uniquely determined $g$, and gluing becomes a local-to-global question about $\ker(j_I)$-invariance of continuous definable functions.
We give a precise characterisation in the o-minimal setting.

\begin{defi}[Robustly disconnected fiber]\label{def:robust-disconnection}
A fiber $j_I^{-1}(t_0)$ is \textbf{robustly disconnected} if there exist an open $N \ni t_0$ in $I'$ and disjoint non-empty opens $V, W$ in $I$ with $j_I^{-1}(N) \subseteq V \cup W$, $V \cap j_I^{-1}(t_0) \neq \varnothing$, $W \cap j_I^{-1}(t_0) \neq \varnothing$.
\end{defi}

Connected fibers are never robustly disconnected.
The converse is strict: a fiber can be disconnected but not \emph{robustly} so, when the disconnection ``heals'' in nearby fibers.

\begin{ex}[Punctured square]\label{ex:punctured-square}
Let $I = [0,1]^2 \setminus \{(0.5, 0.5)\}$, $j_I(x,y) = x$.
The fiber at $x = 0.5$ is disconnected, but nearby fibers $\{x\} \times [0,1]$ ($x \neq 0.5$) are connected and cannot be separated by $V \cup W$.
No robust disconnection exists, so $\FF_{j,1}^{\mathrm{ri}}$ is a sheaf despite the disconnected fiber.
\end{ex}

\begin{theorem}[Sheaf characterisation for stateless explanations]\label{thm:singleton-ns}
In $\mathrm{Def}_c(\M)$ (the wide subcategory with continuous definable maps), suppose $I$ is compact and $\mathrm{im}(j_O)$ is definably connected with $|\mathrm{im}(j_O)| \geq 2$.
Then
\[
\FF_{j,1}^{\mathrm{ri}} \text{ is a sheaf} \iff \text{no fiber of } j_I \text{ is robustly disconnected.}
\]
When $\mathrm{im}(j_O)$ is totally disconnected, the sheaf condition holds trivially.
\end{theorem}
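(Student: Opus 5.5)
We first reduce the sheaf condition for $\FF_{j,1}^{\mathrm{ri}}$ to a statement about functions. By the remark preceding Definition~\ref{def:robust-disconnection}, a stateless section over $\cS_U$ is a continuous definable $h : I_U \to \mathrm{im}(j_O)\subseteq O'$ that factors as $h = g\circ j_I$. Equivalently, $h$ is constant on each fibre of $j_I|_{I_U}$. Restricted-behavioral equivalence of one-state machines is just equality of $h$. So separation is automatic: the patches jointly cover $S_b\times I$, hence jointly cover $I$. Gluing then asks the following. Take continuous definable functions $h_\alpha$ on the input patches $I_\alpha$ that agree on overlaps and are each constant on fibres of $j_I|_{I_\alpha}$. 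Their union $h$ is continuous and definable on $I$, since the $I_\alpha$ are open and finitely many. The question is whether $h$ is constant on every full fibre $j_I^{-1}(t)$. Local invariance only makes $h$ constant along ``chains'' of points in a common fibre linked through patches. For the totally disconnected case, the continuous $h$ is constant on connected components of $I$, and we aim to show this already gives fibre-constancy. We expect this step to need care, and if necessary we would argue it via the same chain argument as below.

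($\Leftarrow$) Suppose no fibre is robustly disconnected. Fix $t_0$ and suppose $h$ takes two values on $F=j_I^{-1}(t_0)$. Since $h$ is locally fibre-constant, the level sets of $h$ restricted to $F$ are relatively open in $F$, so $F$ splits as $A\sqcup B$ with $A,B$ nonempty and each relatively clopen. Using compactness of $I$, $F$ is compact, so $A$ and $B$ are disjoint compacts. By normality, there are disjoint opens $V\supseteq A$ and $W\supseteq B$. The key step is then a tube-lemma argument: because $I$ is compact and $j_I$ is continuous, $j_I$ is a closed map. The set $I\setminus(V\cup W)$ is compact, so its image is closed and misses $t_0$. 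Its complement $N$ is therefore an open neighbourhood of $t_0$ with $j_I^{-1}(N)\subseteq V\cup W$. This exhibits robust disconnection, a contradiction. Hence $h$ is constant on fibres, descends to $g$, and we obtain a global section.

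($\Rightarrow$) Given a robustly disconnected fibre with data $N,V,W$, we build a compatible family that does not glue. Since $\mathrm{im}(j_O)$ is definably connected with at least two points, it contains a definable arc between distinct values $o_0\neq o_1$. Choose a continuous definable bump $\rho : I'\to[0,1]$ with $\rho(t_0)=1$ and support in $N$; this uses definable Urysohn in o-minimal structures. Define $h=\gamma(\rho\circ j_I)$ on $V\cap j_I^{-1}(N)$ and $h=\gamma(0)$ elsewhere, where $\gamma$ is the arc. Continuity holds because $V\cap j_I^{-1}(N)$ is clopen in $j_I^{-1}(N)$ and $\rho$ vanishes near $\partial N$. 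Then cover $I$ by the two data patches $I_1 = I\setminus (W\cap j_I^{-1}(\mathrm{supp}\rho))$ and $I_2 = I\setminus (V\cap j_I^{-1}(\mathrm{supp}\rho))$, taking full state spaces and outputs. On $I_1$, $h$ equals $\gamma(\rho\circ j_I)$ on the relevant fibres, and on $I_2$ it equals the constant $\gamma(0)$. Both are $j_I$-invariant on their patches and agree on the overlap. However, $h$ takes values $\gamma(1)$ and $\gamma(0)$ on $F$, so no global $g$ exists.

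The main obstacle is this construction. The patches must be open, so $\mathrm{supp}\rho$ must be closed with $\rho$ nonzero exactly inside; we would take $I_1,I_2$ as complements of closed sets. The patches must also jointly cover, which holds since $V\cap W=\varnothing$. Finally, we must check that both are definable open immersions and that the covering condition of Definition~\ref{def:coverings} holds on the output side as well.
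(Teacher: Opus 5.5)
\paragraph{Overall.} Your ($\Leftarrow$) direction is exactly the paper's argument: the level sets of $h$ on $F=j_I^{-1}(t_0)$ are relatively clopen, normality separates them by opens $V,W$, and closedness of $j_I$ on the compact $I$ gives $N$. Your ($\Rightarrow$) construction is also the paper's, up to swapping $V$ and $W$. Your patches simplify to $V\cup(I\setminus j_I^{-1}(\mathrm{supp}\,\rho))$ and $W\cup(I\setminus j_I^{-1}(\mathrm{supp}\,\rho))$, which are the paper's $U_1,U_2$ with $\mathrm{supp}\,\rho$ in place of $\bar N_0$.

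\paragraph{The gap in ($\Rightarrow$).} In $\FF_{j,1}^{\mathrm{ri}}$ you do not get to choose $h$. The presheaf lives over a fixed system, and the only candidate is $h=j_O\circ f$, where $f:I\to O$ is that system's continuous definable output map. To refute the sheaf property you must therefore exhibit such an $f$. An arc chosen in $\mathrm{im}(j_O)$ need not lift through $j_O$. For instance, take $O=[0,1]\cup[2,3]$ with $j_O(o)=o$ on $[0,1]$ and $j_O(o)=o-1$ on $[2,3]$. Then $\mathrm{im}(j_O)=[0,2]$ is connected, but no continuous $f$ into $O$ can make $j_O\circ f$ sweep from $0$ to $2$ over a connected set. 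So your $h$ need not come from any system.

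The fix is the paper's. $O$ has finitely many definably connected components. If $j_O$ were constant on each, $\mathrm{im}(j_O)$ would be finite with at least two points, hence not definably connected. So choose a component $C_0$ on which $j_O$ is non-constant, points $o_0,o_1\in C_0$ with $j_O(o_0)\neq j_O(o_1)$, and a definable arc $\tilde\gamma$ in $C_0$ joining them. Take the stateless system with output $f=\tilde\gamma\circ\rho\circ j_I$ on $V\cap j_I^{-1}(N)$ and $f=o_0$ elsewhere. Your verification then goes through with $h=j_O\circ f$.

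Two smaller points. First, say explicitly, as the paper tacitly assumes, that the explained system is stateless. Second, check that the descended $g$ is continuous, as a morphism of $\mathrm{Def}_c(\M)$ must be. Globally this holds because $j_I:I\to j_I(I)$ is a closed surjection, hence a quotient map. On your patches, $g_1=j_O\circ\tilde\gamma\circ\rho$ and $g_2$ is constant.

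\paragraph{The totally disconnected clause.} Your plan for the final sentence of the statement cannot be completed. Constancy of $h$ on components of $I$ says nothing when a fibre meets two components. The chain argument of ($\Leftarrow$) only produces a robust disconnection, which that clause does not exclude.

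In fact the clause fails as stated, and the paper asserts it without proof. Take $I=\{a,b,c\}$, $I'=\{t,u\}$, $j_I(a)=j_I(b)=t$, $j_I(c)=u$, $O=O'=\{0,1\}$, $j_O=\id$. Let the system be stateless with output $f(a)=f(c)=0$, $f(b)=1$. The subsystems with inputs $\{a,c\}$ and $\{b,c\}$ (full $O$) form a covering by definable open immersions. They carry the sections $g_1\equiv 0$ and $g_2(t)=1$, $g_2(u)=0$, which agree on the overlap's interface $\{u\}$. Yet no global $g$ exists, since $g(t)$ would have to be both $0$ and $1$.

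What you can prove is the following. If $I$ is definably connected, then $h$ is a continuous map into the finite set $\mathrm{im}(j_O)$ (totally disconnected definable sets are finite), hence constant, so gluing is trivial. With no assumption on $\mathrm{im}(j_O)$ at all, your ($\Leftarrow$) argument shows that absence of robust disconnection implies the sheaf property, since it never uses connectedness of $\mathrm{im}(j_O)$.
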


\begin{proof}[Proof sketch]
($\Rightarrow$) Given a robust disconnection $j_I^{-1}(N) \subseteq V \cup W$, use the connected image of $j_O$ and a definable bump function to construct a continuous $f : I \to O$ that is $\ker(j_I)$-invariant on $V$ and on $W$ separately, but with $j_O(f(a)) \neq j_O(f(b))$ for $a \in V \cap j_I^{-1}(t_0)$, $b \in W \cap j_I^{-1}(t_0)$.
The covering $U_1 := V \cup (I \setminus j_I^{-1}(N))$, $U_2 := W \cup (I \setminus j_I^{-1}(N))$ provides compatible local sections that do not glue.

($\Leftarrow$) If $h$ is locally $\ker(j_I)$-invariant but $h(x) \neq h(y)$ with $j_I(x) = j_I(y) = t_0$, separate the level sets of $h|_{j_I^{-1}(t_0)}$ by disjoint opens (normality of compact $I$), then use compactness to find $N \ni t_0$ with $j_I^{-1}(N)$ contained in the union---a robust disconnection.
\end{proof}

\begin{coro}\label{cor:connected-fibers}
Connected fibers of $j_I$ $\Rightarrow$ no robust disconnection $\Rightarrow$ $\FF_{j,1}^{\mathrm{ri}}$ is a sheaf.
The first implication is strict (Example~\ref{ex:punctured-square}).
\end{coro}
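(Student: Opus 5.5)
The corollary chains two implications, and I will prove each directly from the definitions, with the sheaf conclusion coming from Theorem~\ref{thm:singleton-ns}.

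\emph{First implication.} Suppose every fiber of $j_I$ is connected, and suppose for contradiction that some fiber $j_I^{-1}(t_0)$ is robustly disconnected via $N$, $V$, $W$. Then
\[
j_I^{-1}(t_0) \subseteq j_I^{-1}(N) \subseteq V \cup W .
\]
Hence $j_I^{-1}(t_0) = (V \cap j_I^{-1}(t_0)) \sqcup (W \cap j_I^{-1}(t_0))$. This writes the fiber as a disjoint union of two sets, each open in the subspace topology, and both are non-empty by Definition~\ref{def:robust-disconnection}. That contradicts connectedness of the fiber. In the o-minimal setting I would read ``connected'' as definably connected. Since $V$ and $W$ need not be definable, I would either use topological connectedness of the fibers or note that the argument only needs a topological disconnection. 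For definable sets these notions agree in o-minimal expansions of $\Rr$, so the step is harmless.

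\emph{Second implication.} This is exactly the ($\Leftarrow$) direction of Theorem~\ref{thm:singleton-ns}, under its standing hypotheses: $\mathrm{Def}_c(\M)$, $I$ compact, and $\mathrm{im}(j_O)$ definably connected with at least two points. When $\mathrm{im}(j_O)$ is totally disconnected, the sheaf property holds trivially by the last sentence of that theorem. I would state that the corollary inherits these hypotheses. The only point needing care is that the ($\Leftarrow$) direction does not actually use the hypothesis on $\mathrm{im}(j_O)$; its proof sketch only uses compactness and normality of $I$. So the implication should hold without that hypothesis, which I would note in passing.

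\emph{Strictness.} I would invoke Example~\ref{ex:punctured-square}, where $I = [0,1]^2 \setminus \{(0.5,0.5)\}$ and $j_I(x,y) = x$. The fiber at $t_0 = 0.5$ is disconnected. To check that no robust disconnection exists, suppose $N \ni 0.5$ is open and $V$, $W$ are disjoint open sets covering $j_I^{-1}(N)$. Pick $x \in N$ with $x \neq 0.5$. The fiber $\{x\} \times [0,1]$ is connected, so it lies entirely in $V$ or entirely in $W$.

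The sets $\{x \in N \setminus\{0.5\} : \text{fiber} \subseteq V\}$ and the analogous set for $W$ are open, because each fiber is compact and the covering sets are open (tube lemma). They partition the two intervals of $N \setminus \{0.5\}$ on either side of $0.5$. Shrinking $N$ to an interval, each side therefore lies wholly in $V$ or wholly in $W$.

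Now take $y_0 \neq 0.5$. The points $(x, y_0)$ converge to $(0.5, y_0)$ from both sides. Taking $y_0 < 0.5$ and $y_0 > 0.5$, both pieces of the fiber at $0.5$ are approached from the left side. Hence both pieces lie in the closure of a single one of $V$, $W$, and since $V$ and $W$ are open and disjoint, both pieces lie in that same one. This contradicts the requirement that $V$ and $W$ each meet the fiber at $0.5$.

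I expect this last verification to be the main obstacle; the two implications themselves are immediate.
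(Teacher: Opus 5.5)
Your proposal is correct and follows the paper's route. The first arrow comes straight from Definition~\ref{def:robust-disconnection}, the second is the ($\Leftarrow$) direction of Theorem~\ref{thm:singleton-ns}, and strictness comes from Example~\ref{ex:punctured-square}, where your tube-lemma and closure argument rigorously fills in the paper's one-line claim that nearby connected fibers cannot be split by $V \cup W$.

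One caveat: the punctured square is not compact, and under the compactness of $I$ that you say the corollary inherits, the same ($\Leftarrow$) argument (normality of $I$ plus closedness of $j_I(I \setminus (V \cup W))$) shows every disconnected fiber is robustly disconnected. So the strictness is a purely topological fact realised only outside the theorem's hypotheses, which is why you were right not to claim the sheaf property for that example.
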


\subsection{Summary of the gluing landscape}\label{subsec:landscape}

\begin{center}
\renewcommand{\arraystretch}{1.2}
\begin{tabular}{lcc}
\hline
\textbf{Presheaf} & \textbf{Separated?} & \textbf{Gluing?} \\
\hline
$\tilde{\FF}_j$ (unquotiented) & Yes & Yes (sheaf) \\
$\FF_j$ (cogerm) & No & Yes \\
$\FF_j^{\mathrm{beh}}$ (behavioral) & Yes & No (Cex.~\ref{cex:beh-gluing-fail}) \\
$\FF_j^{\mathrm{ri}}$ (restricted-interface) & $j$-full only & No in general \\
$\FF_{j,1}^{\mathrm{ri}}$ (stateless) & $j$-full only & Thm.~\ref{thm:singleton-ns}: iff no robust disconn. \\
\hline
\end{tabular}
\end{center}

\noindent The pattern: finer equivalence relations admit gluing but break separation; coarser ones yield separation but break gluing.
The ``right'' quotient depends on the interpretability task.

\section{Approximate Sections and Helly Obstructions}\label{sec:approximate}

The exact sheaf condition asks whether locally consistent explanations glue to a global one.
In practice, explanations are rarely exact: a probe achieves a given accuracy, a LIME fit has a fidelity bound, a concept bottleneck leaks information.
We relax the dynamics square from exact equality to $\varepsilon$-approximation and ask: when do locally \emph{approximately consistent} explanations compose into a globally approximately consistent one?

\subsection{$\varepsilon$-Sections}\label{subsec:eps-sections}

Let $(O', \rho)$ be a metric space.
For a stateless system $\cS_U$, i.e. where $|S_{U,b}| = |S_{U,a}| = 1$, write $f : I_U \to O$ for the output function $f(i) := \pi_O(\alpha_U(*, i))$, where $\pi_O$ is the projection onto~$O$.
A stateless explanatory section ($|S'| = 1$) over $\cS_U$ reduces to a function $g : I' \to O'$ satisfying the dynamics square $g(j_I(i)) = j_O(f(i))$ for all $i \in I_U$.

\begin{defi}[$\varepsilon$-section]\label{def:eps-section}
An \textbf{$\varepsilon$-section} over $\cS_U$ is a function $g : I' \to O'$ satisfying
\[
  \rho(g(j_I(i)),\; j_O(f(i))) \leq \varepsilon \qquad \text{for all } i \in I_U.
\]
Write $\FF_j^\varepsilon(\cS_U)$ for the set of $\varepsilon$-sections.
Note that $g$ is defined on the full interface~$I'$, not the restricted $I'_U := \mathrm{im}(j_I|_{I_U})$; this is the $\varepsilon$-relaxation of the stateless behavioral presheaf (stateless sections of $\FF_j^{\mathrm{beh}}$ with $|S'| = 1$), not of $\FF_{j,1}^{\mathrm{ri}}$.
At $\varepsilon = 0$, this recovers the exact stateless sections of $\FF_j^{\mathrm{beh}}$.
\end{defi}

For each interpretable input $i' \in I'_U$, define the \textbf{target set}
\[
T(i', U) := \{ j_O(f(i)) : i \in I_U,\; j_I(i) = i' \} \subseteq O'
\]
and the \textbf{feasibility set} $A_\varepsilon(i', U) := \bigcap_{t \in T(i', U)} \bar{B}(t, \varepsilon)$, the intersection of closed $\varepsilon$-balls centred at the target values.
An $\varepsilon$-section exists iff $A_\varepsilon(i', U) \neq \varnothing$ for every $i' \in I'_U$.

For a covering $\{U_1, \ldots, U_n\}$, the target sets compose by union: $T(i', \bigcup_\alpha U_\alpha) = \bigcup_\alpha T(i', U_\alpha)$, so the global feasibility set decomposes as
\begin{equation}\label{eq:feasibility-intersection}
A_\varepsilon\!\left(i', \textstyle\bigcup_\alpha U_\alpha\right) = \bigcap_\alpha A_\varepsilon(i', U_\alpha).
\end{equation}
Global section existence at $i'$ thus requires the intersection of the per-patch feasibility sets to be non-empty.

\begin{rem}\label{rem:eps-sheaf}
$\FF_j^\varepsilon$ is a sheaf: restriction is the identity on functions (a global $g : I' \to O'$ is either $\varepsilon$-close or not on each patch), so separation is trivial, and gluing follows because the $\varepsilon$-constraint is pointwise in $i \in I_U$.
This sheaf property relies on the stateless restriction; a presheaf of multi-state $\varepsilon$-sections would have a more complex structure.
This contrasts with the exact behavioral presheaf $\FF_j^{\mathrm{beh}}$, which is separated but not a sheaf (\S\ref{sec:separation}--\ref{sec:gluing}).
\end{rem}

\subsection{Obstruction depth}\label{subsec:helly}

At $\varepsilon = 0$, pairwise section existence implies global existence: if every pair of patches admits a common exact section, then the target sets are all singletons (the same point), so the global section exists.
At $\varepsilon > 0$, this fails: the feasibility sets $A_\varepsilon(i', U_\alpha)$ are balls whose pairwise intersections may be non-empty even when the full intersection is empty.
This is exactly the setup of Helly's theorem.

\begin{theorem}[Helly obstruction depth]\label{thm:helly}
Suppose $O' \subseteq \Rr^d$ is convex and $\rho$ is the Euclidean metric.
Let $\{U_1, \ldots, U_n\}$ be a covering with $n \geq d+1$, and fix $i' \in I'$.
If $A_\varepsilon(i', U_{\alpha_1} \cup \cdots \cup U_{\alpha_{d+1}}) \neq \varnothing$ for every $(d+1)$-element sub-covering, then $A_\varepsilon(i', U_1 \cup \cdots \cup U_n) \neq \varnothing$.

The bound $d+1$ is sharp (Example~\ref{ex:equilateral}).
\end{theorem}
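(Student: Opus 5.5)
The argument applies Helly's theorem in $\Rr^d$ to the per-patch feasibility sets $K_\alpha := A_\varepsilon(i', U_\alpha)$, $\alpha = 1, \ldots, n$.

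\emph{Step 1 (convexity).} Each $K_\alpha$ is an intersection of closed Euclidean balls $\bar{B}(t,\varepsilon)$, so it is convex. It is also closed. One subtlety is that the balls live in $O'$, not in $\Rr^d$. Since $O'$ is convex, $\bar B_{O'}(t,\varepsilon) = \bar B_{\Rr^d}(t,\varepsilon) \cap O'$ is still convex, and intersections of convex sets are convex. So every $K_\alpha$ is a convex subset of $\Rr^d$. If $T(i',U_\alpha) = \varnothing$ (no input of $U_\alpha$ lies over $i'$), I take the convention $K_\alpha = O'$, the empty intersection, which is convex. This is consistent with equation~\eqref{eq:feasibility-intersection}.

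\emph{Step 2 (translate the hypothesis).} By equation~\eqref{eq:feasibility-intersection} applied to the sub-covering $\{U_{\alpha_1},\ldots,U_{\alpha_{d+1}}\}$, we have $A_\varepsilon(i', U_{\alpha_1}\cup\cdots\cup U_{\alpha_{d+1}}) = K_{\alpha_1}\cap\cdots\cap K_{\alpha_{d+1}}$. The hypothesis therefore says that every $d+1$ of the convex sets $K_1,\ldots,K_n$ have a common point. Because $n \ge d+1$, every subfamily of at most $d+1$ sets is contained in such a $(d+1)$-subfamily, so its intersection is also non-empty.

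\emph{Step 3 (apply Helly).} There are finitely many sets, since coverings are finite by Definition~\ref{def:coverings}. Helly's theorem for finite families of convex sets in $\Rr^d$ then gives $\bigcap_{\alpha=1}^n K_\alpha \neq \varnothing$. Equation~\eqref{eq:feasibility-intersection} again identifies this intersection with $A_\varepsilon(i', U_1\cup\cdots\cup U_n)$, which proves the claim. The finite version of Helly needs no compactness, so whether $T$ is infinite is irrelevant.

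The main care point is Step 1. I need convexity relative to $O'$ rather than to $\Rr^d$, and I need equation~\eqref{eq:feasibility-intersection} to hold for arbitrary sub-unions, including the empty-target convention. This is also where the convexity assumption on $O'$ is genuinely used.

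Sharpness is deferred to Example~\ref{ex:equilateral}. The plan there is to take $d+1$ patches in $\Rr^d$ whose single target points are the vertices of a regular simplex, and to choose $\varepsilon$ strictly between the radius of the circumsphere of a facet and the circumradius of the whole simplex. Then every $d$ of the balls meet, but all $d+1$ do not.
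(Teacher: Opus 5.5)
Your proposal is correct and follows the paper's own argument: convexity of each per-patch feasibility set (intersected with the convex $O'$), the decomposition \eqref{eq:feasibility-intersection} to turn the hypothesis into a $(d+1)$-wise intersection condition, and the finite Helly theorem in $\Rr^d$. Your extra care is accurate and goes slightly beyond the paper: you make the empty-target convention explicit, and your sharpness plan uses a regular simplex with $d+1$ vertices, whereas the paper's Example mislabels this as a ``regular $(d+1)$-simplex.''
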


\begin{proof}
Each $A_\varepsilon(i', U_\alpha) \cap O'$ is convex (intersection of closed balls with a convex set in $\Rr^d$).
By~\eqref{eq:feasibility-intersection}, $A_\varepsilon(i', \bigcup_{\alpha \in S} U_\alpha) = \bigcap_{\alpha \in S} A_\varepsilon(i', U_\alpha)$ for any subset $S$.
The hypothesis says every $(d+1)$-element sub-family of $\{A_\varepsilon(i', U_\alpha) \cap O'\}_\alpha$ has non-empty intersection.
By Helly's theorem~\cite{Helly1923} in $\Rr^d$, the whole family has non-empty intersection.
\end{proof}

\begin{ex}[Equilateral triangle]\label{ex:equilateral}
Let $O' = \Rr^2$, $I = \{a, b, c\}$, $j_I \equiv \star$ (all inputs collapse), $j_O = \id$, and $f(a) = (0,0)$, $f(b) = (2,0)$, $f(c) = (1, \sqrt{3})$ (equilateral triangle, side $2$).
Three singleton patches $U_\alpha = \{\alpha\}$.

The pairwise Chebyshev radius (circumradius of a segment) is $1$; the triple Chebyshev radius (circumradius of the triangle) is $2/\sqrt{3} \approx 1.155$.
For $\varepsilon \in (1, \, 2/\sqrt{3})$:
\begin{itemize}
\item $\varepsilon$-sections exist on every pair of patches (pairwise Chebyshev radius $= 1 \leq \varepsilon$),
\item but no $\varepsilon$-section exists on the triple ($2/\sqrt{3} > \varepsilon$).
\end{itemize}
This is a genuine $3$-fold obstruction in $\Rr^2$, not reducible to any pairwise failure.
The construction generalises: a regular $(d+1)$-simplex in $\Rr^d$ gives a $(d+1)$-fold obstruction.
\end{ex}

\subsection{Interpretability reading}\label{subsec:eps-interp}

The Helly obstruction has a direct interpretability reading: \emph{even if every pair of local explanations can be reconciled into a joint $\varepsilon$-approximate explanation, the full set may be globally irreconcilable---and the dimension of the output space determines how many explanations must be simultaneously checked.}

For \textbf{hard} $k$-class classification ($O' = \{1, \ldots, k\}$, discrete metric), the Helly number is $2$: pairwise reconcilability always suffices.
For \textbf{soft} classification ($O' = \Delta^{k-1}$, the probability simplex in $\Rr^{k-1}$), the Helly number is $k$: all $k$ local explanations must be checked simultaneously.
\emph{For $k \geq 3$, soft classifiers have strictly higher reconciliation complexity than hard classifiers.}

For unrestricted multi-state explanations with $\psi_b$ injective ($|S'| \geq |S_{U,b}|$, $\psi_b = \id_{S_{U,b}}$), the trivial partition (each state in its own class) decouples the output constraints across states.
The obstruction depth remains at most $\dim(O') + 1$ per state, with states contributing independently.
The stateless case ($|S'| = 1$) is thus the \textbf{worst case}: all states share one explanatory state, creating maximal constraint coupling.
Understanding this in depth remains future work.
\section{Discussion}\label{sec:discussion}

\paragraph{Concrete instantiation.}
Consider a neural network as a dynamical system: the state space $S_b$ is the space of hidden activations at a given layer, the dynamics $\alpha$ maps an activation--input pair to the next-layer activation and the network's output, and the judge $j = (j_I, j_O)$ projects the raw input/output spaces onto interpretable concepts.
A \emph{section} over a region of activation space is then an explanation: a simpler system $\cS'$ with interpretable interface $(I', O')$ that faithfully represents the network's dynamics through the judge.

Several interpretability methods illustrate this pattern.
\emph{Representation Engineering}~\cite{Zou_Phan_Chen_Campbell_Guo_Ren_Pan_Yin_Mazeika_Dombrowski_et_al_2023} identifies ``reading vectors'' $v \in \Rr^d$ such that the projection $v^T \cdot h$ of a hidden state $h \in S_b$ tracks a high-level concept (honesty, sentiment, factuality).
In our framework, $j_O$ projects the network's output onto the concept score, and a stateless section ($|S'| = 1$) is a function $g : I' \to O'$ predicting the concept score from interpretable inputs alone---precisely a \emph{probe}.
\emph{Automata extraction}~\cite{Zhang_Wei_Sun_2024} recovers a finite-state machine $\cS'$ simulating a transformer on a formal language: a \emph{multi-state section} where the explanatory system has $|S'| > 1$ internal states and the judge projects tokens to a finite alphabet.
\emph{Feature attribution} methods (saliency maps~\cite{Adebayo_Gilmer_Muelly_Goodfellow_Hardt_Kim_2018}, SHAP~\cite{Lundberg_Lee_2017}) assign importance scores to input features; Bilodeau et al.~\cite{Bilodeau_Jaques_Koh_Kim_2024} show that such methods can provably fail---in our language, the presheaf of feature-attribution explanations may have empty sections on certain regions.
The \textbf{judge is the central design choice}: it formalises \emph{what} ``interpretable'' means, and different judges yield different presheaves with different local-to-global properties.

\paragraph{What the results say.}
\emph{Separation} (Theorem~\ref{thm:separation}) says that if two behavioral explanations agree on every data slice, they are globally identical.
This is a non-trivial prediction: a behavioral probe that distinguishes two explanations locally will also distinguish them globally---there is no ``invisibility through aggregation.''
The \emph{failure of gluing} (Counterexample~\ref{cex:beh-gluing-fail}) says that locally consistent explanations need not assemble into a global one.
The obstruction is specific: it requires the dynamics to send states from distinct behavioral regions to a common after-state, overdetermining the explanation at the junction.
This is an analogous symptom to \emph{polysemanticity}: when a neuron responds to multiple unrelated concepts, a ``one concept per neuron'' explanation works on each concept's data region but fails to glue globally.
The mechanism differs---polysemanticity arises from linear-algebraic superposition in activation space, while the gluing obstruction is dynamical---but the local-success-global-failure pattern is the same.

The sheaf characterisation (Theorem~\ref{thm:singleton-ns}) makes this topological: in the continuous definable setting, $\FF_{j,1}^{\mathrm{ri}}$ is a sheaf iff no fiber of $j_I$ is robustly disconnected.
Concretely, if the interpretable input projection groups raw inputs into clusters, gluing holds when these clusters are topologically connected---it fails when a cluster splits into components that cannot be healed by nearby clusters.

The $\varepsilon$-relaxation (\S\ref{sec:approximate}) addresses the practical regime where explanations are approximate.
A probe whose worst-case error is at most $\varepsilon$ (in the metric on $O'$) is an $\varepsilon$-section.
(This is a uniform bound, stronger than the average-case accuracy typical in practice; the Helly bound applies to the worst-case regime.)
Theorem~\ref{thm:helly} then predicts: for regression probes with $d$-dimensional output ($d \geq 2$), checking pairwise consistency of local probes is insufficient---one must check $(d+1)$-wise consistency.
For hard classification ($O'$ discrete), pairwise checks suffice (target values must all agree); for soft $k$-class classifiers ($O' = \Delta^{k-1} \subset \Rr^{k-1}$, convex), Theorem~\ref{thm:helly} gives Helly number $k$.
This gives a concrete, testable prediction about when local interpretability audits miss global inconsistencies.

\paragraph{The role of the categorical framework.}
The results above are proved by direct topological and combinatorial arguments: separation is pointwise, the gluing obstruction is an explicit counterexample, the sheaf characterisation uses compactness and normality, and the Helly bound uses convex geometry.
The categorical framework---the site of open immersions, the presheaf hierarchy, the covering conditions---does not provide the proof techniques; it provides the \emph{questions}.
Without the sheaf-theoretic language, there is no reason to ask whether local probes glue, no systematic distinction between separation and gluing, and no framework connecting the Helly bound to the sheaf condition.
The machinery currently exceeds what the proofs use: adhesivity is invoked only for cogerm gluing (Theorem~\ref{thm:gluing-Fj}), not for the interpretability-relevant results.
We view this as characteristic of a first paper in a new direction: the framework's value lies in organising the conceptual landscape, and we expect future work (multi-state characterisations, graph locality, cohomological obstructions) to make the categorical tools load-bearing.

\paragraph{Abstract framework.}
The results factor cleanly through abstract axioms on a pair $(\CC, \mathcal{M})$ where $\CC$ is a finitely complete, extensive, regular category and $\mathcal{M} \subseteq \mathrm{Mono}(\CC)$ is a class of monomorphisms stable under pullback and composition (playing the r\^ole of ``open immersions'').
Each result uses only the axioms it needs:
\emph{separation} (Theorem~\ref{thm:separation}) requires only that the covering family is jointly epimorphic---every state--input pair is seen by some patch---and the pointwise nature of behavioral equivalence;
\emph{cogerm gluing} (Theorem~\ref{thm:gluing-Fj}) uses full adhesivity of $\CC$ (Appendix~\ref{app:adhesivity}), which holds for $\mathrm{Def}(\M)$ but not for $\mathrm{Def}_c(\M)$ (Remark~\ref{rem:m-adhesivity-fails}); extending cogerm gluing to the continuous setting---e.g.\ by restricting cogerm witnesses to $\mathcal{M}$-morphisms---remains open, but the interpretability-relevant results (separation, sheaf characterisation) do not require it;
the \emph{sheaf characterisation} of $\FF_{j,1}^{\mathrm{ri}}$ (Theorem~\ref{thm:singleton-ns}) additionally requires a topological enrichment $T : \CC \to \mathbf{Top}$ sending $\mathcal{M}$-morphisms to open embeddings, with compactness and normality of $T(I)$.
Regularity and the descent condition---finite jointly epimorphic $\mathcal{M}$-families are effective epimorphic---make the coverings into a subcanonical Grothendieck pretopology (Proposition~\ref{prop:abstract-pretopology}).

The concrete category $\mathrm{Def}(\M)$ satisfies all axioms except~(T): it is adhesive with a strict initial object (hence extensive), regular (image factorisation is definable), and $\mathcal{M}$ = definable open immersions.
This gives every result except the sheaf characterisation.
The subcategory $\mathrm{Def}_c(\M)$ additionally satisfies~(T) and~(A'), enabling the sheaf characterisation, but full adhesivity fails (Remark~\ref{rem:m-adhesivity-fails}), so cogerm gluing is not established in that setting; whether $\mathrm{Def}_c(\M)$ is regular remains to be verified (the standard image factorisation does not obviously transfer, since continuous definable bijections need not be homeomorphisms), though no result in this paper depends on it.
Open-cover pushouts in $\mathrm{Def}_c(\M)$ are simply unions of definable open subsets, and VK reduces to adhesivity of $\mathbf{Set}$ via the pasting lemma (Proposition~\ref{prop:defc-vk}); this suffices for the pretopology axioms but not for the colimit construction in cogerm gluing.
This factorisation clarifies what each result genuinely depends on and opens the framework to other base categories: finite $T_0$~spaces or $p$-adic definable sets.

\paragraph{Cohomological directions.}
When gluing fails, \v{C}ech cohomology can measure the obstruction.
The compatible family of Counterexample~\ref{cex:beh-gluing-fail} suggests a non-trivial \v{C}ech $1$-cocycle for the behavioral presheaf, recording the incompatible behavioral classes forced by the dynamics on the overlap.
Making this precise requires defining the coefficient structure: the behavioral presheaf is a presheaf of sets, not of abelian groups, requiring care with non-abelian \v{C}ech cohomology or a linearisation step.
For bounded linearised presheaves (truncating the target state space to $|S'| \leq N$), we expect $\check{H}^1 = 0$ on the covering of Counterexample~\ref{cex:beh-gluing-fail} for $N \geq 2$: informally, restriction maps are surjective because a fresh state can always accommodate a new source state, and surjections of sets induce surjections of free vector spaces. The gluing obstruction would then move to $\check{H}^0$, measuring compatible local data that fail to assemble globally; making this precise is a direction for future work.

\paragraph{Beyond this paper.}
The site we construct captures \emph{data locality} (decomposing the state, input, and output spaces).
Two other dimensions of interpretability locality---\emph{graph locality} (decomposing the computational graph) and \emph{feature locality} (decomposing the activation space)---require different site structures: graph locality suggests sites of compositional decompositions (in the sense of operad-algebraic systems theory~\cite{SchultzSpivak2019, NiuSpivak2025}), while feature locality suggests quotient sites for coarsenings of the judge.
Developing these structures and studying their interactions is an important direction for future work.

\bibliographystyle{eptcs}
\bibliography{bibliography}

\appendix

\section{Abstract framework}\label{app:framework}

We give precise statements and proofs for the axiomatic framework described in \S\ref{sec:discussion}.

\begin{defi}[Admissible pair]\label{def:admissible-pair}
An \emph{admissible pair} $(\CC, \mathcal{M})$ consists of:
\begin{enumerate}
\item[\textbf{(C1)}] A finitely complete, extensive (finite coproducts are disjoint and universal), regular (every morphism factors as regular epi $+$ mono, and regular epis are stable under pullback) category $\CC$;
\item[\textbf{(C2)}] A class $\mathcal{M} \subseteq \mathrm{Mono}(\CC)$ stable under pullback and composition, containing all isomorphisms.
\end{enumerate}
An $\mathcal{M}$-\emph{covering family} of an object $B$ is a finite family $\{m_i : A_i \hookrightarrow B\}_{i=1}^n$ of $\mathcal{M}$-monomorphisms that is jointly epimorphic (i.e.\ any two morphisms $g, h : B \to C$ agreeing on every $A_i$ must be equal; in concrete categories such as $\mathrm{Def}(\M)$, this is equivalent to being jointly surjective).
\end{defi}

\begin{defi}[Modular add-ons]\label{def:addons}
Given an admissible pair $(\CC, \mathcal{M})$:
\begin{enumerate}
\item[\textbf{(C3')}] \emph{Descent:} every $\mathcal{M}$-covering family is effective epimorphic (i.e.\ $B$ is the colimit of the \v{C}ech nerve of the family).
\item[\textbf{(A')}] \emph{Open-cover VK:} for any finite family of $\mathcal{M}$-subobjects $\{m_i : A_i \hookrightarrow X\}$ of a common object $X$, the pushout $\bigcup_i A_i$ along the pairwise overlaps $A_i \times_X A_j$ exists in $\CC$ and satisfies the Van Kampen condition (pulling back along any morphism yields a pushout).
\item[\textbf{(T)}] \emph{Topological enrichment:} a faithful functor $T : \CC \to \mathbf{Top}$ sending $\mathcal{M}$-morphisms to open embeddings.
\end{enumerate}
\end{defi}

\begin{prop}[Subcanonical pretopology]\label{prop:abstract-pretopology}
If $(\CC, \mathcal{M})$ satisfies \textbf{(C1)--(C2)} and \textbf{(C3')}, then $\mathcal{M}$-covering families form a subcanonical Grothendieck pretopology on~$\CC$.
\end{prop}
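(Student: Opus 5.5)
We verify the pretopology axioms (T1)--(T3) for $\mathcal{M}$-covering families, then prove subcanonicity, i.e.\ that every representable presheaf $\CC(-,C)$ is a sheaf. Throughout, pullbacks exist by finite completeness (C1), and $\mathcal{M}$ is stable under pullback and composition and contains the isomorphisms (C2).

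\textbf{(T1)} An isomorphism $B' \to B$ lies in $\mathcal{M}$ by (C2). A one-element family consisting of an isomorphism is jointly epimorphic, since $g\circ u = h\circ u$ with $u$ invertible gives $g=h$. So it is a covering.

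\textbf{(T3)} Let $\{m_i : A_i \to B\}$ be a covering and, for each $i$, let $\{n_{ik} : A_{ik}\to A_i\}$ be a covering of $A_i$. Each composite $m_i n_{ik}$ lies in $\mathcal{M}$ by (C2), and the composite family is finite. It is jointly epimorphic: if $g m_i n_{ik} = h m_i n_{ik}$ for all $i,k$, then $g m_i = h m_i$ for each $i$ because the inner family is jointly epi, and hence $g = h$ because the outer family is.

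\textbf{(T2)} This axiom requires that for any $f : B'\to B$ the pulled-back family $\{m_i' : A_i\times_B B' \to B'\}$ is again a covering. Membership in $\mathcal{M}$ follows from (C2). Joint epimorphicity is the step I expect to be the main obstacle, because jointly epimorphic families are not pullback-stable in a general category. My plan is to route through extensivity and regularity:
\begin{itemize}
\item Form the coproduct $\coprod_i A_i$. Joint epimorphicity of $\{m_i\}$ is equivalent to the induced map $e : \coprod_i A_i \to B$ being an epimorphism.
\item Using (C3'), show that $e$ is in fact a regular epimorphism. A family that is effective epimorphic presents $B$ as the colimit of its \v{C}ech nerve, and extensivity lets us rewrite this colimit as the coequaliser of the kernel pair of $e$. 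Concretely, $\coprod_{i,j} A_i\times_B A_j \cong (\coprod_i A_i)\times_B(\coprod_j A_j)$ by universality of coproducts. Hence $e$ is the coequaliser of its kernel pair, so it is regular.
\item By regularity (C1), regular epis are stable under pullback, so $f^*e$ is a regular epi.
\item By universality of coproducts, $f^*e$ is identified with the induced map $\coprod_i (A_i\times_B B') \to B'$. It is therefore epi, which is exactly joint epimorphicity of the pulled-back family.
\end{itemize}
The same argument shows the pulled-back family is effective epimorphic. It is the coequaliser of its own kernel pair, because regular epis in a regular category are effective and pullback-stable. Equivalently, (C3') applies to it directly once we know it is a covering.

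\textbf{Subcanonicity.} Fix $C$ and a covering $\{m_i\}$ of $B$. A matching family for $\CC(-,C)$ is a tuple of maps $g_i : A_i\to C$ that agree on the overlaps $A_i\times_B A_j$. By (C3'), $B$ is the colimit of the \v{C}ech nerve of the covering, equivalently the coequaliser of the two projections $\coprod_{i,j}A_i\times_BA_j \rightrightarrows \coprod_i A_i$. The matching family is exactly a cocone on this diagram, so it factors uniquely through $B$. This gives both existence and uniqueness of the amalgamation, so $\CC(-,C)$ is a sheaf.

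The only delicate points are the identification of the \v{C}ech-nerve colimit with the kernel-pair coequaliser, and pullback-stability. Both are handled by extensivity together with regularity, which is precisely why these hypotheses appear in (C1).
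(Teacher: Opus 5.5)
Your proposal is correct and follows essentially the same route as the paper's proof. (T1) and (T3) are immediate. For (T2), (C3') and extensivity make $\bigsqcup_i A_i \to B$ a regular epimorphism, regularity makes its pullback along $f$ a regular epimorphism, and universality of coproducts identifies that pullback with the canonical map of the pulled-back family. Subcanonicity is read off directly from (C3'); you are somewhat more explicit than the paper (spelling out the matching-family/cocone correspondence and observing that pulled-back families are again covering families, hence effective epimorphic), but the argument is the same.
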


\begin{proof}
We verify the pretopology axioms.

\emph{(T1) Isomorphisms cover.}
$\{\id_B\}$ is an $\mathcal{M}$-family (isos $\in \mathcal{M}$ by (C2)) and is trivially jointly epimorphic.

\emph{(T2) Stability under pullback.}
Let $\{m_i : A_i \hookrightarrow B\}$ be a covering and $f : C \to B$ a morphism.
The pullback projections $p_i : C \times_B A_i \to C$ are $\mathcal{M}$-monomorphisms (pullback of $m_i \in \mathcal{M}$, stable by~(C2)).
It remains to show $\{p_i\}$ is jointly epimorphic.
By~(C3'), $\{m_i\}$ is effective epimorphic; by extensivity, this is equivalent to the canonical map $c : \bigsqcup_i A_i \to B$ being an effective epimorphism (the \v{C}ech nerve of the family identifies with the kernel pair of~$c$).
In particular $c$ is a regular epimorphism, hence stable under pullback by regularity: $f^* c$ is a regular epi.
By extensivity again, $f^* c \cong \bigsqcup_i (C \times_B A_i) \to C$ is the canonical map of $\{p_i\}$, and a regular epi in a regular category is effective, so $\{p_i\}$ is jointly epimorphic.

\emph{(T3) Transitivity.}
$\mathcal{M}$ is closed under composition (C2); compositions of jointly epimorphic families are jointly epimorphic (if $g \circ m_i \circ n_{ij} = h \circ m_i \circ n_{ij}$ for all $i, j$, then $g \circ m_i = h \circ m_i$ by joint epimorphicity of $\{n_{ij}\}$, then $g = h$ by joint epimorphicity of $\{m_i\}$).

\emph{Subcanonicity.} Effective epimorphic families satisfy descent by definition: compatible data on the covering glues uniquely.
\end{proof}

\begin{prop}[Open-cover VK for $\mathrm{Def}_c(\M)$]\label{prop:defc-vk}
Let $\M$ be an o-minimal expansion of $(\Rr, <, +, \cdot)$, $\mathrm{Def}_c(\M)$ the category of definable sets with continuous definable maps, and $\mathcal{M} = \{$definable open immersions$\}$.
Then $(\mathrm{Def}_c(\M), \mathcal{M})$ satisfies \textbf{(A')}.
\end{prop}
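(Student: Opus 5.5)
The plan is to prove (A') in two steps. First I construct the pushout concretely as the union $U := \bigcup_i m_i(A_i) \subseteq X$ of the definable open images. Then I check the Van Kampen condition by reducing it to $\mathbf{Set}$, using the fact that continuity can be checked locally on an open cover (the pasting lemma).

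\emph{Existence.} Identify each $A_i$ with its image $m_i(A_i)$, which is a definable open subset of $X$. The overlaps $A_i \times_X A_j$ are then $A_i \cap A_j$. These are definable and open, and they are pullbacks in $\mathrm{Def}_c(\M)$: a continuous map into $X$ landing in both $A_i$ and $A_j$ lands in the intersection, and it is continuous for the subspace topology. Set $U := \bigcup_i A_i$; a finite union of definable opens is definable and open. For the universal property, take a cocone $g_i : A_i \to Y$ of continuous definable maps agreeing on the overlaps. The underlying-set pushout gives a unique function $g : U \to Y$ with $g|_{A_i} = g_i$. Its graph is $\bigcup_i \mathrm{graph}(g_i)$, a finite union of definable sets, so $g$ is definable. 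Since the $A_i$ form an open cover of $U$ and each $g|_{A_i}$ is continuous, $g$ is continuous by the pasting lemma. Uniqueness holds because $\mathrm{Def}_c(\M) \to \Set$ is faithful.

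\emph{Van Kampen.} I use the formulation in the statement: pulling the colimit diagram back along any morphism yields a pushout. Let $f : Z \to U$ be a continuous definable map. Pulling back gives $f^{-1}(A_i)$, which is definable (as a preimage under a definable map) and open (by continuity of $f$). The pairwise pullbacks are $f^{-1}(A_i \cap A_j) = f^{-1}(A_i) \cap f^{-1}(A_j)$. Because the $A_i$ cover $U$, the sets $f^{-1}(A_i)$ are definable open subsets covering $Z$. The existence argument above, applied with $Z$ in place of $X$, then shows that $Z$ is the pushout of the pulled-back diagram. If the full VK cube condition (the converse direction, for cartesian natural transformations over the diagram) is wanted, I would handle it as follows. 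Compute everything in $\Set$, where the adhesivity of $\mathbf{Set}$ gives the required pullback/pushout squares on underlying sets. Then observe that every set-level pushout arising is a union along open subobjects, so the existence step lifts it back to $\mathrm{Def}_c(\M)$. The pullbacks also lift, because they are computed as subspaces.

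\emph{Main obstacle.} The main risk is the converse VK direction. There the pulled-back objects $Z_i \to A_i$ are given abstractly, and one must show that the induced maps $Z_i \to Z$ are open immersions, so that the gluing is topologically a union of opens. My first approach is to note that if the squares $Z_i \to Z$ over $A_i \to U$ are pullbacks, then $Z_i \cong f^{-1}(A_i)$ is open in $Z$, because the pullback of an open immersion is an open immersion. Once that is established, the pasting lemma again supplies continuity.
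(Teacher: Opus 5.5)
Your argument for (A') as the statement defines it (existence of $\bigcup_i A_i$ as the colimit over pairwise overlaps, plus stability of that colimit under pullback along any $f : Z \to \bigcup_i A_i$) is correct. It is essentially the paper's proof: realise the colimit as a union of definable opens, get continuity from the pasting lemma and definability from the finite union of graphs, and use $f^{-1}(A_i)$ as a definable open cover of $Z$ to which the same argument reapplies. The only difference is that you work with the $n$-ary diagram directly, whereas the paper treats $U_1 \cup U_2$ over $U_1 \cap U_2$ and inducts. For existence and stability the two routes are equivalent.

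Your optional treatment of the converse (descent) direction does not work.
\begin{enumerate}
\item The ``main obstacle'' paragraph assumes that the squares $Z_i \to Z$ over $A_i \to U$ are pullbacks. That is exactly what the descent direction must establish, so the paragraph only re-proves stability.
\item More seriously, for $n \geq 3$ the descent direction is false for the colimit over pairwise overlaps, already in $\Set$. Adhesivity of $\Set$ concerns only binary pushouts along monos, and an $n$-ary diagram with only pairwise overlaps imposes no cocycle condition on triple overlaps. For a counterexample, take $X = A_i = A_{ij} = \{*\}$ for $1 \leq i < j \leq 3$ and $Z_i = Z_{ij} = \{0,1\}$. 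Let every leg $Z_{ij} \to Z_i$ be the identity except $Z_{13} \to Z_3$, which is the swap. All naturality squares over $A_\bullet$ are pullbacks, and the colimit of $Z_\bullet$ (in $\Set$ and in $\mathrm{Def}_c(\M)$) is a point. Yet the square $Z_1 \to \{*\}$ over $A_1 \to \{*\}$ is not a pullback.
\end{enumerate}
This is why the paper states the two-sided VK equivalence only for binary pushouts, where $\Set$-adhesivity genuinely applies, and reaches $n$ pieces by iterating $U_1 \cup \cdots \cup U_{n-1}$ against $U_n$. If you want the converse, restructure along those lines.

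Even in the binary case, your claim that ``every set-level pushout arising is a union along open subobjects'' needs justification. In the descent direction, $Z_1$ and $Z_2$ are glued abstractly along $Z_{12}$, not inside a common ambient definable set, so your existence step does not apply verbatim.
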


\begin{proof}
Let $U_1, U_2$ be definable open subsets of a definable set $X$, with $V = U_1 \cap U_2$.
We show $P = U_1 \cup U_2$ (with the subspace topology from $X$) is the pushout of $V \hookrightarrow U_1$ and $V \hookrightarrow U_2$ in $\mathrm{Def}_c(\M)$.

\emph{Universal property.}
Given continuous definable $g_1 : U_1 \to Z$ and $g_2 : U_2 \to Z$ with $g_1|_V = g_2|_V$, define $g : P \to Z$ by $g(x) = g_i(x)$ for $x \in U_i$.
This is well-defined on $V$, continuous by the pasting lemma ($U_1$ and $U_2$ are open in~$P$), and definable: $\mathrm{graph}(g) = \mathrm{graph}(g_1) \cup \mathrm{graph}(g_2)$ is a finite union of definable sets.

\emph{Van Kampen.}
The forgetful functor $U : \mathrm{Def}_c(\M) \to \mathbf{Set}$ preserves finite limits.
The set-theoretic pushout $U(P) = U(U_1) \cup U(U_2)$ coincides with the pushout in~$\mathbf{Set}$, which satisfies VK by adhesivity of~$\mathbf{Set}$.
For any $f : W \to P$, the pullback gives definable open subsets $W_i = f^{-1}(U_i) \subseteq W$ covering $W$, with $W_1 \cap W_2 = f^{-1}(V)$.
The VK equivalence at the set level (top face is a pushout $\Leftrightarrow$ front/right faces are pullbacks) lifts to $\mathrm{Def}_c(\M)$: the unique set-theoretic mediating map is continuous (pasting lemma on the open cover $\{W_1, W_2\}$) and definable (piecewise definability on a definable open cover).

The proof handles the binary case $n = 2$; the general case of $n$ subobjects follows by induction, taking $U_1 \cup \cdots \cup U_{n-1}$ and $U_n$ as the two pieces at each step (the intermediate union is a definable open subset of~$X$, hence an $\mathcal{M}$-subobject).
\end{proof}

\begin{rem}\label{rem:m-adhesivity-fails}
Full $\mathcal{M}$-adhesivity of $\mathrm{Def}_c(\M)$ \emph{fails}: pushouts along an $\mathcal{M}$-mono $m : V \hookrightarrow U_1$ with an arbitrary continuous definable $f : V \to U_2$ need not satisfy VK.
A counterexample: $V = (-1,1)$, $U_1 = \Rr$, $U_2 = \Rr^2$, $m : V \hookrightarrow U_1$ the inclusion, $f : V \to U_2$ defined by $f(t) = (t^2 - 1, t(t^2-1))$ (nodal cubic parametrisation; $f$ is injective on~$V$ but not an open immersion).
As $t \to \pm 1$, $f(t) \to (0,0)$; in the pushout $P = U_1 \sqcup_V U_2$, any continuous map out of $P$ must send $\mathrm{can}_1(\pm 1)$ and $\mathrm{can}_2(0,0)$ to the same point, so the Hausdorff quotient identifies them.
The pullback $U_1 \times_P U_2$ is then $[-1,1]$, not $(-1,1) = V$: the pushout square is not a pullback.
Property~\textbf{(A')} avoids this by requiring \emph{both} legs to be $\mathcal{M}$-monomorphisms.
\end{rem}

\section{Adhesivity of $\Sys_{\mathrm{ho}}(I,O)$}\label{app:adhesivity}

\begin{proof}[Proof of adhesivity (Theorem~\ref{thm:adhesivity})]
Let $\CC$ be adhesive with finite products, and let $U : \Sys_{\mathrm{ho}}(I,O) \to \CC$ be the forgetful functor $(S, \alpha) \mapsto S$.

\emph{Pullbacks.}
$U$ creates pullbacks: given $(S_1, \alpha_1) \xrightarrow{f} (S_3, \alpha_3) \xleftarrow{g} (S_2, \alpha_2)$, the pullback state space $P = S_1 \times_{S_3} S_2$ in $\CC$ acquires a unique dynamics $\alpha_P : P \times I \to P \times O$ making the projections into morphisms, because $- \times I$ and $- \times O$ preserve pullbacks.

\emph{Pushouts along monomorphisms.}
Let $m : (C, \gamma) \hookrightarrow (A, \alpha_A)$ be a mono (i.e.\ $m : C \hookrightarrow A$ is mono in $\CC$) and $f : (C, \gamma) \to (B, \alpha_B)$ a morphism.
The pushout $P = A \sqcup_C B$ exists in $\CC$ by adhesivity.
The dynamics $\alpha_P : P \times I \to P \times O$ is defined using the canonical identification
\[
  P \times I \;\cong\; (A \times I) \sqcup_{C \times I} (B \times I),
\]
which follows from the Van Kampen condition applied to the cube whose bottom face is the pushout $A \sqcup_C B$ and whose vertical maps are the product projections $- \times I \to -$ (all four vertical faces are pullbacks: the pullback of a projection $\pi_Y : Y \times Z \to Y$ along any $g : X \to Y$ is $X \times Z$).
On the $A \times I$ component, $\alpha_P$ is $(\mathrm{can}_A \times \id_O) \circ \alpha_A$; on the $B \times I$ component, $\alpha_P$ is $(\mathrm{can}_B \times \id_O) \circ \alpha_B$.
Compatibility on $C \times I$ follows from the morphism conditions on $m$ and $f$; maps out of the pushout $(A \times I) \sqcup_{C \times I} (B \times I)$ are determined by their restrictions, so the dynamics is unique.

\emph{Van Kampen condition.}
Given a commutative cube in $\Sys_{\mathrm{ho}}(I,O)$ whose bottom is a pushout along a mono and whose back and left faces are pullbacks, apply $U$: by adhesivity of $\CC$, the top of the underlying cube is a pushout iff the front and right faces are pullbacks.
Since $U$ creates both pullbacks (above) and pushouts along monomorphisms---the distributivity identification determines unique dynamics on any such pushout---the equivalence lifts to $\Sys_{\mathrm{ho}}(I,O)$: the ``if'' direction holds because $U$ creates pullbacks, and the ``only if'' direction holds because a pushout in $\Sys_{\mathrm{ho}}(I,O)$ is a pushout in $\CC$ equipped with the unique compatible dynamics. \end{proof}

No exponentiability hypothesis is used: the entire argument works with the uncurried dynamics $\alpha : S \times I \to S \times O$, never forming $(-)^I$. However, assuming exponentiability, the above is a special case of the general Theorem~\ref{thm:coalg_adhesivity_body}.

\begin{proof}[Proof of Theorem~\ref{thm:coalg_adhesivity_body}]
Let $U : \Coalg(H) \to \CC$ be the forgetful functor.

\emph{Pullbacks.}
$U$ creates pullbacks: given coalgebra morphisms $f : (A, \alpha) \to (C, \gamma)$ and $g : (B, \beta) \to (C, \gamma)$, the pullback $P = A \times_C B$ in $\CC$ acquires a unique coalgebra structure $\pi : P \to H(P)$ via $H(P) \cong H(A) \times_{H(C)} H(B)$ (using that $H$ preserves pullbacks).

\emph{Monomorphisms.}
Since $U$ is faithful, it reflects monomorphisms.
Conversely, if $f : (A, \alpha) \to (B, \beta)$ is mono in $\Coalg(H)$, its kernel pair $(R, \pi_1, \pi_2)$ in $\CC$ lifts to a coalgebra kernel pair by pullback creation, so $\pi_1 = \pi_2$ gives $U(f)$ mono in $\CC$.
Thus monomorphisms in $\Coalg(H)$ correspond to monomorphisms in $\CC$.

\emph{Pushouts along monomorphisms.}
Let $m : (C, \gamma) \hookrightarrow (A, \alpha_A)$ be a mono and $f : (C, \gamma) \to (B, \alpha_B)$ a morphism.
The pushout $P = A \sqcup_C B$ exists in $\CC$ by adhesivity.
The colimit cocone $\alpha_A, \alpha_B$ induces a coalgebra structure on $P$ via the universal property: the cocone $D(j) \xrightarrow{\alpha_j} H(D(j)) \xrightarrow{H(\iota_j)} H(P)$ is compatible by the coalgebra morphism conditions on $m$ and $f$, so the universal property yields $\alpha_P : P \to H(P)$.
Uniqueness: a map out of the pushout is determined by its restrictions to $A$ and $B$, and the coalgebra morphism conditions determine these.

\emph{Van Kampen condition.}
Given a commutative cube in $\Coalg(H)$ whose bottom is a pushout along a mono and whose back and left faces are pullbacks, apply $U$: by adhesivity of $\CC$, the top of the underlying cube is a pushout iff the front and right faces are pullbacks.
This equivalence lifts to $\Coalg(H)$: the ``if'' direction uses pullback creation, and the ``only if'' direction uses that a pushout in $\Coalg(H)$ maps to a pushout in $\CC$ equipped with the unique compatible coalgebra structure.\end{proof}

For $\CC = \Set$, this result was established by Padberg~\cite{Padberg2017} in the framework of $\mathcal{M}$-adhesive categories; the general statement for an arbitrary adhesive base appears to be new.
Whether the pullback-preservation properties we used can be weakened to weak pullbacks---which would cover many more functors of coalgebraic interest~\cite{Johnstone2001}---remains open.

\section{Proof of Theorem~\ref{thm:gluing-Fj}}\label{app:gluing-proof}

\begin{proof}
The proof uses adhesivity of $\mathrm{Def}(\M)$ (\S\ref{subsec:o-minimal}) and the fact that all systems in the construction share the interface $(I', O')$.

\emph{Step 1: Choose representatives and overlap witnesses.}
Choose representatives $(\cS_\alpha', \psi_\alpha)$ from each equivalence class.
Compatibility on overlaps means: for each pair $(\alpha, \beta)$, the restrictions $(\cS_\alpha', \psi_\alpha \circ n_\alpha)$ and $(\cS_\beta', \psi_\beta \circ n_\beta)$ are cogerm-equivalent in $\FF_j(\cS_\alpha \cap \cS_\beta)$, where $n_\alpha : \cS_\alpha \cap \cS_\beta \hookrightarrow \cS_\alpha$.
So there exists a system $\cS_{\alpha\beta}''$ with interface $(I', O')$ and monomorphisms
$i_\alpha : \cS_{\alpha\beta}'' \hookrightarrow \cS_\alpha'$, $i_\beta : \cS_{\alpha\beta}'' \hookrightarrow \cS_\beta'$
with identity interface components, and a morphism $\phi_{\alpha\beta} : \cS_\alpha \cap \cS_\beta \to \cS_{\alpha\beta}''$ with $i_\alpha \circ \phi_{\alpha\beta} = \psi_\alpha \circ n_\alpha$ and $i_\beta \circ \phi_{\alpha\beta} = \psi_\beta \circ n_\beta$.

\emph{Step 2: Construct the global target by colimit.}
Form the system $\cS'$ as the colimit of the diagram
\[
\cdots \cS_\alpha' \xleftarrow{i_\alpha} \cS_{\alpha\beta}'' \xrightarrow{i_\beta} \cS_\beta' \cdots
\]
in $\Sys$.
Concretely, $S_b' = \colim(S_{\alpha,b}' \leftarrow S_{\alpha\beta,b}'' \rightarrow S_{\beta,b}')$, and similarly for $S_a'$; since all $\cS_\alpha'$ and $\cS_{\alpha\beta}''$ are homogeneous and all morphisms lie in $\Sys_{\mathrm{ho}}(I',O')$, the two colimit diagrams coincide, so $S_b' = S_a'$ and $\cS'$ is homogeneous.
The interface of $\cS'$ is $(I', O')$ since all pieces share this interface and all identifications are identity on interface.
The colimit exists in $\mathrm{Def}(\M)$: it is the quotient of $\bigsqcup_\alpha S_{\alpha,b}'$ by the definable equivalence relation generated by $s \sim s'$ whenever $s = (i_\alpha)_b(t)$ and $s' = (i_\beta)_b(t)$ for some $t \in S_{\alpha\beta,b}''$.
Dynamics compatibility propagates through the generators of this equivalence relation: if $s$ and $s'$ are identified via $t \in S_{\alpha\beta,b}''$, their images under the dynamics are identified via the same $t$ (since $\cS_{\alpha\beta}''$ is a subsystem), so the relation is compatible with the dynamics by induction on chain length.

Since the covering is finite, this colimit is constructed as iterated binary pushouts along monomorphisms.
By adhesivity, each binary pushout along a mono preserves monos, so inductively the canonical maps $\mathrm{can}_\alpha : \cS_\alpha' \hookrightarrow \cS'$ are monomorphisms with identity interface components.

The dynamics $\alpha' : S_b' \times I' \to S_a' \times O'$ is well-defined on the quotient: if $s = (i_\alpha)_b(t)$ is identified with $s' = (i_\beta)_b(t)$ for $t \in S_{\alpha\beta,b}''$, then $\alpha_\alpha'(s, i')$ and $\alpha_\beta'(s', i')$ are identified in $S_a'$ by the same equivalence relation, since $\cS_{\alpha\beta}''$ is a subsystem of both $\cS_\alpha'$ and $\cS_\beta'$.

\emph{Step 3: Glue the morphisms.}
The compositions $\mathrm{can}_\alpha \circ \psi_\alpha : \cS_\alpha \to \cS'$ form a compatible family: for $x \in \cS_\alpha \cap \cS_\beta$,
\[
(\mathrm{can}_\alpha \circ \psi_\alpha)(n_\alpha(x)) = \mathrm{can}_\alpha(i_\alpha(\phi_{\alpha\beta}(x))) = \mathrm{can}_\beta(i_\beta(\phi_{\alpha\beta}(x))) = (\mathrm{can}_\beta \circ \psi_\beta)(n_\beta(x)),
\]
where the middle equality holds by the defining property of the colimit.
Since the covering is jointly surjective on $S_b \times I$ and $S_a \times O$, the compatible family defines a unique morphism $\psi : \cS \to \cS'$: compatibility ensures well-definedness on overlaps, and the dynamics square holds globally because it holds on each patch.

\emph{Step 4: Verify interface and restriction.}
On each $\cS_\alpha$, $\psi_I|_{I_\alpha} = (\mathrm{can}_\alpha)_I \circ (j_I \circ m_{I,\alpha}) = j_I \circ m_{I,\alpha}$ since $\mathrm{can}_\alpha$ has identity interface.
Joint surjectivity gives $\psi_I = j_I$; similarly $\psi_O = j_O$.
Hence $(\cS', \psi) \in \FF_j(\cS)$.

The restriction to $\cS_\alpha$ satisfies $\psi \circ m_\alpha = \mathrm{can}_\alpha \circ \psi_\alpha$.
Since $\mathrm{can}_\alpha : \cS_\alpha' \hookrightarrow \cS'$ is a monomorphism with identity interface, taking $\cS'' = \cS_\alpha'$, $i_1 = \mathrm{can}_\alpha$, $i_2 = \id$, $\phi = \psi_\alpha$ witnesses $(\cS', \mathrm{can}_\alpha \circ \psi_\alpha) \sim (\cS_\alpha', \psi_\alpha)$ in $\FF_j(\cS_\alpha)$.
\end{proof}

\section{Proof of Theorem~\ref{thm:singleton-ns}}\label{app:ns-proof}

\begin{proof}
\emph{($\Rightarrow$): robustly disconnected $\Rightarrow$ not a sheaf.}
Suppose $j_I^{-1}(t_0)$ is robustly disconnected, with $j_I^{-1}(N) \subseteq V \cup W$.
Pick a closed definable $\bar{N}_0 \subset N$ with $t_0 \in \mathrm{int}(\bar{N}_0)$, and set $C := I \setminus j_I^{-1}(\bar{N}_0)$.
Since $\mathrm{im}(j_O)$ is definably connected with $\geq 2$ points, some definably connected component $C_0$ of $O$ has $j_O|_{C_0}$ non-constant.
Pick $o_0, o_1 \in C_0$ with $j_O(o_0) \neq j_O(o_1)$, connected by a definable arc $\tilde{\gamma} : [0,1] \to C_0$ with $\tilde{\gamma}(0) = o_0$, $\tilde{\gamma}(1) = o_1$, and let $\beta : \bar{N}_0 \to [0,1]$ be a definable bump with $\beta|_{\partial \bar{N}_0} = 0$, $\beta(t_0) = 1$ (existence: normality of compact definable sets, cf.~\cite[Ch.~6]{vandenDries1998}).
Define $f : I \to O$ by $f(x) = o_0$ on $V \cap j_I^{-1}(\bar{N}_0)$ and $I \setminus j_I^{-1}(\bar{N}_0)$, and $f(x) = \tilde{\gamma}(\beta(j_I(x)))$ on $W \cap j_I^{-1}(\bar{N}_0)$.
Then $f$ is continuous (the $V$ and $W$ parts are clopen in $j_I^{-1}(\bar{N}_0)$, and $f \to o_0$ at the boundary).
On $U_1$, $f$ is constant $o_0$, so $j_O \circ f|_{U_1}$ factors through $j_I$.
On $U_2$, the $W \cap j_I^{-1}(\bar{N}_0)$ part depends on $j_I(x)$ only, the $C$ part is constant, and no point in $W \cap j_I^{-1}(\bar{N}_0)$ shares a $j_I$-value with any point in $C$, so factorisation holds.
The covering $U_1 := V \cup C$, $U_2 := W \cup C$ gives two sections $j_O \circ f|_{U_i}$ that agree on $U_1 \cap U_2 = C$ but satisfy $j_O(f(a)) = j_O(o_0) \neq j_O(o_1) = j_O(f(b))$ for $a \in V \cap j_I^{-1}(t_0)$, $b \in W \cap j_I^{-1}(t_0)$.

\emph{($\Leftarrow$): no robust disconnection $\Rightarrow$ sheaf.}
Suppose $h : I \to O'$ is locally $\ker(j_I)$-invariant on a covering $\{I_\alpha\}$ but not globally: there exist $x, y$ with $j_I(x) = j_I(y) = t_0$ and $h(x) \neq h(y)$.
Since $h$ is constant on $j_I^{-1}(t_0) \cap I_\alpha$ for each $\alpha$, the function $h|_{j_I^{-1}(t_0)}$ is locally constant with clopen level sets.
Set $A := \{z \in j_I^{-1}(t_0) : h(z) = h(x)\}$ and $B := j_I^{-1}(t_0) \setminus A$ (both non-empty, disjoint, closed in $I$).
By normality of $I$ (compact Hausdorff), there exist disjoint open $V \supseteq A$, $W \supseteq B$.
Since $S := I \setminus (V \cup W)$ is closed with $S \cap j_I^{-1}(t_0) = \varnothing$, compactness gives $t_0 \notin j_I(S)$, so some open $N \ni t_0$ satisfies $j_I^{-1}(N) \subseteq V \cup W$---a robust disconnection.
Since every locally $\ker(j_I)$-invariant function that is not globally invariant yields a robust disconnection, the contrapositive gives: no robust disconnection implies every locally invariant function is globally invariant, i.e.\ $\FF_{j,1}^{\mathrm{ri}}$ is a sheaf.
\end{proof}

\section{Counterexample Diagrams}\label{app:diagrams}

\begin{figure}[h]
\centering
\begin{tikzpicture}[
  state/.style={circle, draw, minimum size=7mm, inner sep=1pt, font=\small},
  >=stealth, auto, font=\scriptsize
]
\node[state] (s1) at (0,0) {$s_1$};
\node[state] (s2) at (2.5,0) {$s_2$};

\draw[->] (s1) to[loop above, looseness=5] node{$a/0$} (s1);
\draw[->] (s1) to[bend left=15] node{$b/0$} (s2);
\draw[->] (s2) to[bend left=15] node{$a/1$} (s1);
\draw[->] (s2) to[loop above, looseness=5] node{$b/1$} (s2);
\end{tikzpicture}
\caption{The $2$-state system of Counterexample~\ref{cex:ri-sep-fail} (separation failure for $\FF_j^{\mathrm{ri}}$).
Edge labels: input/output.
The input-splitting covering $\cS|_{\{a\}}$, $\cS|_{\{b\}}$ restricts each patch to a single input; an alternative explanation agrees on pure-input sequences but disagrees on the mixed sequence $(a,b)$.}
\label{fig:cex-ri-sep}
\end{figure}
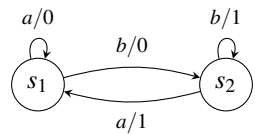

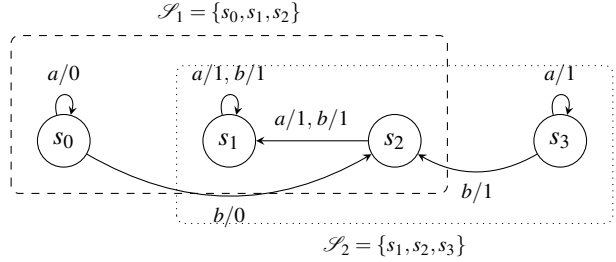
\begin{figure}[h]
\centering
\begin{tikzpicture}[
  state/.style={circle, draw, minimum size=7mm, inner sep=1pt, font=\small},
  >=stealth, auto, font=\scriptsize
]
\node[state] (s0) at (0,0) {$s_0$};
\node[state] (s1) at (2.2,0) {$s_1$};
\node[state] (s2) at (4.4,0) {$s_2$};
\node[state] (s3) at (6.6,0) {$s_3$};

\draw[->] (s0) to[loop above, looseness=5] node{$a/0$} (s0);
\draw[->] (s1) to[loop above, looseness=5] node{$a/1,\, b/1$} (s1);
\draw[->] (s3) to[loop above, looseness=5] node{$a/1$} (s3);

\draw[->] (s0) to[bend right=30] node[below]{$b/0$} (s2);
\draw[->] (s2) to node[above]{$a/1,\, b/1$} (s1);
\draw[->] (s3) to[bend left=30] node[below]{$b/1$} (s2);

\draw[dashed, rounded corners=3pt] (-0.7,-0.7) rectangle (5.1,1.4);
\node[font=\scriptsize] at (2.2,1.7) {$\cS_1 = \{s_0, s_1, s_2\}$};

\draw[dotted, rounded corners=3pt] (1.5,-1.1) rectangle (7.3,1.0);
\node[font=\scriptsize] at (4.4,-1.4) {$\cS_2 = \{s_1, s_2, s_3\}$};
\end{tikzpicture}
\caption{The $4$-state system of Counterexample~\ref{cex:beh-gluing-fail} (gluing failure for $\FF_j^{\mathrm{beh}}$).
Edge labels: input/output; the judge collapses inputs ($j_I : \{a,b\} \to \{\bullet\}$).
The dashed and dotted boxes show the covering patches; the overlap is $\{s_1, s_2\}$.
The obstruction: $s_0 \xrightarrow{b} s_2$ and $s_3 \xrightarrow{b} s_2$ force incompatible behavioral requirements on the after-state $s_2$.}
\label{fig:cex-beh-gluing}
\end{figure}

\section{Index of Notation}\label{app:notation}

\noindent
\begin{tabular}{@{}l@{\quad}l@{}}
\multicolumn{2}{@{}l}{\emph{Categories and spaces}} \\
$\CC$ & Base category (typically adhesive with finite products) \\
$\mathrm{Def}(\M)$ & Category of definable sets in the o-minimal structure $\M$ \\
$\mathrm{Def}_c(\M)$ & Wide subcategory of $\mathrm{Def}(\M)$: continuous definable maps \\
$\Coalg(H)$ & Category of coalgebras for the endofunctor $H$ \\
$\Sys$ & Category of heterogeneous Mealy machines (varying interfaces) \\
$\Sys_{\mathrm{ho}}(I,O)$ & Non-full subcategory: homogeneous systems, $f_b = f_a$ \\
$\Sys_{\mathrm{oi}}$ & Wide subcategory of $\Sys$: definable open immersions \\
$\Set$ & Category of sets \\[4pt]
\multicolumn{2}{@{}l}{\emph{Systems}} \\
$(S_b, S_a, I, O, \alpha)$ & Heterogeneous Mealy machine (system) \\
$\alpha : S_b \times I \to S_a \times O$ & Dynamics map \\
$\iota : \cS' \hookrightarrow \cS$ & Definable open immersion of systems \\[4pt]
\multicolumn{2}{@{}l}{\emph{Judge and interface}} \\
$j = (j_I, j_O)$ & Judge: morphisms to an interpretable interface $(I', O')$ \\
$I'_U$ & Restricted interface $\mathrm{im}(j_I \circ m_I)$ for subsystem $\cS_U$ \\[4pt]
\multicolumn{2}{@{}l}{\emph{Presheaves and equivalences}} \\
$\FF_j$ & Cogerm-quotiented presheaf of explanations \\
$\FF_j^{\mathrm{beh}}$ & Behavioral presheaf (quotient by $\approx$; separated) \\
$\FF_j^{\mathrm{ri}}$ & Restricted-interface presheaf \\
$\FF_{j,1}^{\mathrm{ri}}$ & Stateless subpresheaf of $\FF_j^{\mathrm{ri}}$ (sections with $|S'| = 1$) \\
$\sim$ & Cogerm equivalence on sections \\
$\approx$ & Behavioral equivalence on sections \\
\end{tabular}

\end{document}